\newtheorem{claim}{Claim}
\newtheorem{theorem}{Theorem}[section]
\newtheorem{corollary}{Corollary}
\newtheorem{lemma}{Lemma}[section]
\newtheorem{conjecture}{Conjecture}[section]
\numberwithin{equation}{section}
\title{\bf Spectral extremal graphs of  planar graphs\\ with fixed size\thanks {Research was partially supported by the National
		Nature Science Foundation of China (grant number
		12331012}}
\date{}
\author {Liangdong Fan$^{1}$, \,  Liying Kang$^{1,2}$\thanks{\em Corresponding author. Email address: lykang@shu.edu.cn (L. Kang)}, \, Jiadong Wu$^{1}$\\
	{\small $^{1}$ Department of Mathematics, Shanghai University,
		Shanghai 200444, P.R. China}\\
	{\small$^{2}$Newtouch Center for Mathematics of Shanghai University,
		Shanghai,  China, 200444}}
\begin{document}
	
	\maketitle
	
	\begin{abstract}
Tait and Tobin [J. Combin. Theory Ser. B 126 (2017) 137--161] determined the unique spectral extremal graph over all outerplanar graphs and the unique spectral extremal graph  over all planar graphs when the number of vertices is sufficiently large. In this paper we consider the spectral extremal problems of outerplanar  graphs and planar graphs with fixed number of edges. We prove that the outerplanar graph on $m \geq 64$ edges with the maximum spectral radius is $S_m$, where $S_m$ is a star with $m$ edges. For planar graphs with $m$ edges,	our main result shows that the spectral extremal graph is $K_2 \vee \frac{m-1}{2} K_1$ when $m$ is odd and sufficiently large, and $K_1 \vee (S_{\frac{m-2}{2}} \cup K_1)$ when $m$ is even and sufficiently large. Additionally, we obtain  spectral extremal graphs for path, cycle and matching
in outerplanar graphs and spectral extremal graphs for path, cycle and complete graph on $4$ vertices in planar graphs.
		
\bigskip
		
		\noindent{\bf Keywords:}  Spectral radius; Planar graphs; Outerplanar graphs; Extremal graph theory
		\medskip
		
		\noindent{\bf AMS (2000) subject classification:}  05C35; 05C50
	\end{abstract}
	
	\section{Introduction}
	Let  $\mathcal {F}$ be a  family of graphs. A graph $G$ is called {\em $\mathcal {F}$-free } if for every $F\in \mathcal {F}$, there is no subgraph of $G$ isomorphic to $F$. If $\mathcal {F}=\{F\}$, then $G$ is called  $F$-free. The {\em Tur\'an number} $ex(n, F)$ is the maximum number of edges in a graph on $n$ vertices that is  $\mathcal {F}$-free. Determining Tur\'an number of graphs is one of the central problem in extremal combinatorics.

	The {\em spectral radius} of a graph $G$ is the spectral radius of its adjacent matrix $A(G)$, denoted by $\rho(G)$. By the Perron-Frobenius theorem, the spectral radius $\rho(G)$ is the largest eigenvalue of $A(G)$.
	Nikiforov \cite{Niki2002cpc} proposed a spectral Tur\'an problem which asks to
	determine the maximum spectral radius of an $\mathcal {F}$-free graph with n vertices. This can be viewed as
	the spectral analogue of Tur\'an type problem.
	The spectral  Tur\'an problem has received a great deal of
	attention in the past decades (\cite{Guiduli1996, {LLF2022}, {Niki2007},  {Nikiforov2010}, {Niki2010}, {Niki2011}, {TAIT2017}, {ZL2023}, {WKX2023}, {Wilf1986}}).
	In this paper we consider spectral Tur\'an problem for graphs with fixed number of edges.

	The problem of characterizing graphs of given size with maximal spectral radii was initially
	posed by Brualdi and Hoffman \cite{Brualdi85} as a conjecture, and was completely solved by Rowlinson \cite{Rowlin}. Furthermore, Rowlinson \cite{Rowlin1} determined the unique spectral extremal graph over all Hamiltonian graphs with fixed number of edges.
	Nosal \cite{Nosal1970} showed that for any triangle-free graph $G$ with size $m$, the spectral radius $\rho(G)$ satisfies $\rho(G) \leq \sqrt{m}$. Lin, Ning and
	Wu [7] slightly improved the bound to  $\rho(G) \leq \sqrt{m-1}$ when G is non-bipartite and triangle-free.
	Nikiforov \cite{Niki2002cpc, Niki2006-walks} extended  Nosal's result to $K_{r+1}$-free graphs. In 2009,
	Nikiforov \cite{Niki2009laa}
	obtained a sharp upper bound
	for  $C_4$.
	In 2021, Zhai, Lin and Shu \cite{ZLS2021} determined the unique spectral extremal graph for
	$K_{2, r+1}$ with given size.
	\begin{theorem}[\cite{ZLS2021}]\label{le1}
		Let  $ r \geq 2 $ and $ m \geq 16r^2 $. If $G$ is a $K_{2, r+1}$-free graph with size $m$,
		then $ \rho(G) \leq \sqrt{m} $, and equality holds if and only if $ G $ is a star.
	\end{theorem}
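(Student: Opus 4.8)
The plan is to anchor a Perron‑eigenvector analysis at the vertex of largest weight, and to use the forbidden $K_{2,r+1}$ only through the elementary fact that in a $K_{2,r+1}$-free graph any two vertices have at most $r$ common neighbours.

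First I would let $G$ be a $K_{2,r+1}$-free graph of size $m$ whose spectral radius $\rho=\rho(G)$ is as large as possible. After deleting isolated vertices, $G$ must be connected: if it had two nontrivial components, replacing it by the component of largest spectral radius with the remaining edges re-attached as pendant edges at its top‑weight vertex keeps the graph $K_{2,r+1}$-free (a pendant vertex can be neither of the two vertices nor a common neighbour of them in a $K_{2,r+1}$) and strictly increases $\rho$, contradicting maximality. Since a subgraph of a star has at most one vertex of degree exceeding $1$, $S_m$ is $K_{2,r+1}$-free and $\rho(S_m)=\sqrt m$; hence $\rho\ge\sqrt m$, and it suffices to prove $\rho=\sqrt m$ together with $G\cong S_m$. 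Let $x$ be the positive Perron vector, scaled so that $x_{u^*}=1=\max_v x_v$, and put $N=N(u^*)$, $\overline N=V\setminus N[u^*]$. Squaring the eigenvalue equation at $u^*$ gives
\[
\rho^2=\rho^2 x_{u^*}=d(u^*)+\sum_{v\ne u^*}|N(v)\cap N(u^*)|\,x_v ,
\]
with $|N(v)\cap N(u^*)|\le r$ for every $v\ne u^*$; in particular $\Delta(G[N])\le r$, and $m\ge16r^2$ forces $\rho\ge\sqrt m\ge4r$.

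Next I would regroup the double sum according to the edges meeting $N$. Using $\rho x_v=1+\sum_{w\sim v,\,w\ne u^*}x_w$ for $v\in N$ and summing over $v\in N$, the pairs $v,w\in N$ contribute $\sum_{vw\in E(G[N])}(x_v+x_w)$ while the pairs $v\in N,\ w\in\overline N$ contribute $\sum_{w\in\overline N}|N(w)\cap N(u^*)|\,x_w\le e(N,\overline N)$; combined with $m=d(u^*)+e(G[N])+e(N,\overline N)+e(G[\overline N])$ this yields
\[
\rho^2\le m+\sum_{vw\in E(G[N])}(x_v+x_w-1)-e(G[\overline N]) .
\]
If $G[N]$ is edgeless, this already gives $\rho\le\sqrt m$. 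The same estimate settles the case $\overline N=\varnothing$: there every $v\in N$ satisfies $\rho x_v=1+\sum_{w\in N,\,w\sim v}x_w\le1+r\max_N x$, so choosing $v$ to maximise $x$ over $N$ gives $\max_N x\le1/(\rho-r)\le1/(3r)$, hence $x_v+x_w<1$ on every edge of $G[N]$; if $e(G[N])\ge1$ we would get $\rho^2<m\le\rho^2$, a contradiction, so $e(G[N])=0$ and $G\cong S_m$.

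The crux is therefore the case $\overline N\ne\varnothing$ and $e(G[N])\ge1$, where I must prove
\[
\sum_{vw\in E(G[N])}(x_v+x_w-1)\le e(G[\overline N]) .
\]
Here I would push the eigenvector estimates further: for a vertex $v\ne u^*$ lying in a triangle through $u^*$, bound $x_v$ away from $1$ by combining $|N(v)\cap N(u^*)|\le r$ with a second iteration of the eigenvalue equation over $\overline N$ to control $\sum_{w\in\overline N}x_w$ and $e(N,\overline N)$; alternatively, a Kelmans-type switching that deletes an edge inside $N$ (or at a peripheral vertex of $\overline N$) and re-routes it to $u^*$ preserves $K_{2,r+1}$-freeness while strictly increasing $\rho$, contradicting the choice of $G$. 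I expect this to be the main obstacle: the naive bounds only give $\rho^2\le m+O(r\sqrt m)$, so obtaining the sharp $\rho^2\le m$ requires an accounting in which the quadratic threshold $m\ge16r^2$ is exactly what renders the weight outside $u^*$ negligible. Finally, in the equality case $\rho=\sqrt m$ all the inequalities above are tight, forcing $e(G[N])=e(G[\overline N])=0$ and $|N(v)\cap N(u^*)|\,x_v=|N(v)\cap N(u^*)|$ for every $v$, so $x_v=1$ whenever $v$ shares a neighbour with $u^*$; with connectivity and $\overline N=\varnothing$ this makes every vertex a neighbour of $u^*$ with $N$ independent, i.e.\ $G\cong S_m$, and conversely $\rho(S_m)=\sqrt m$.
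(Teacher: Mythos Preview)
The paper does not prove this theorem; it is quoted verbatim from Zhai, Lin and Shu \cite{ZLS2021} and used only as a black box to deduce the outerplanar result (Theorem~\ref{th1}). So there is no argument in the present paper against which your proposal can be compared.

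Judged on its own, your proposal sets up the right framework and derives the correct key inequality
\[
\rho^2\le m+\sum_{vw\in E(G[N])}(x_v+x_w-1)-e(G[\overline N]),
\]
which is exactly the starting point of the Zhai--Lin--Shu proof. The easy cases ($E(G[N])=\varnothing$, or $\overline N=\varnothing$) are handled cleanly. However, the decisive step---the case $\overline N\ne\varnothing$ and $E(G[N])\ne\varnothing$---is not proved: you write ``I would push the eigenvector estimates further'' and ``I expect this to be the main obstacle,'' offering only heuristics. That is precisely where the real work lies, and where the hypothesis $m\ge 16r^2$ is actually used: one must show that every $v\in N$ incident with an edge of $G[N]$ has $x_v$ small enough (roughly $x_v\le\tfrac12$) to make each summand $x_v+x_w-1$ nonpositive, or else produce enough edges in $G[\overline N]$ to absorb the surplus. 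Your two suggested routes are both circular as stated: the ``second iteration over $\overline N$'' needs a bound on $\sum_{w\in\overline N}x_w$ that you do not supply, and the Kelmans-type switching (delete $vw\in E(G[N])$, attach a pendant to $u^*$) increases $\rho$ only if one already knows $x_v,x_w$ are small, which is the very thing to be shown. So the proposal identifies the crux correctly but leaves it open; as written it is not a proof.
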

	For the $(r+1)$-book $B_{r+1}$, Zhai, Lin and Shu \cite{ZLS2021}  conjectured that $\rho(G)\leq \sqrt{m}$ for $r\geq 2, m$ sufficiently large and every
	$B_{r+1}$-free graph $G$ with size $m$, with equality if and only if $G$ is complete bipartite. This conjecture was solved by Nikiforov \cite{Niki2021}. Recently, Li, Zhai and Shu \cite{LZS2024}
	gave an upper bound of  $\rho(G)$ for
	$C_k^+$-free graph $G$ with size $m$, where $C_k^{+}$
	is a graph on $k$ vertices obtained from $C_k$
	by adding a chord between two vertices with distance two.

	Researchers  also have  shown a strong interest in studying the spectral radius of planar graphs. The spectral radius of planar graphs is particularly useful in geography as a measure of the overall connectivity of these graphs \cite{Cvetkovic1990}. In 1990, Cvetković and Rowlinson \cite{Cvetkovic1990} proposed the following conjecture.
	
	\begin{conjecture}[\cite{Cvetkovic1990}]\label{outerplanar conjecture}
		The outerplanar graph on $n$ vertices with the maximum spectral radius is $K_1 \vee P_{n-1}$.
	\end{conjecture}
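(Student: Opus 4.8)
The plan is to reduce the problem to edge-maximal outerplanar graphs and then run a Perron-vector analysis in the spirit of Tait and Tobin; assume throughout that $n$ is large. Note that $K_1\vee P_{n-1}$ is outerplanar with $2n-3$ edges. Since adding an edge to a connected graph strictly increases the spectral radius, and since every connected outerplanar graph on $n$ vertices is a spanning subgraph of some edge-maximal outerplanar graph (triangulate the inner faces and complete the outer Hamilton cycle), the extremal graph $G$ must be edge-maximal outerplanar, i.e.\ a triangulation of a convex $n$-gon: it is $2$-connected and has exactly $2n-3$ edges. Let $\rho:=\rho(G)$ and let $\mathbf x=(x_v)$ be its Perron vector, normalized so that $\max_v x_v=x_u=1$; comparing with the star $K_{1,n-1}\subseteq K_1\vee P_{n-1}$ gives $\rho\ge\sqrt{n-1}$.

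Next I would record two structural facts forced by outerplanarity: for every vertex $v$ the graph $G[N(v)]$ is a path (in a polygon triangulation the link of a vertex is a path; a degree-$3$ vertex in $G[N(v)]$ together with $v$ yields a $K_{2,3}$, and a cycle yields a non-outerplanar wheel), and any two vertices have at most two common neighbours (three would yield a $K_{2,3}$). Writing $W:=V(G)\setminus N[u]$ and expanding $\rho^2=\rho^2 x_u=(A^2\mathbf x)_u$ over walks of length two from $u$: the walks returning to $u$ contribute $d(u)$, those ending in $N(u)$ contribute $\sum_{w\in N(u)}d_{G[N(u)]}(w)x_w\le 2\sum_{w\in N(u)}x_w=2\rho$, and those ending in $W$ contribute $\sum_{w\in W}|N(u)\cap N(w)|x_w\le 2\sum_{w\in W}x_w$; hence $\rho^2\le d(u)+2\rho+2\sum_{w\in W}x_w$. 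Bounding $\rho\sum_{w\in W}x_w=\sum_{w\in W}\sum_{y\sim w}x_y$ by the at most $2|W|$ weight on edges from $W$ to $N(u)$ plus the at most $2e(G[W])\le 4|W|$ weight on edges inside $W$ gives $\sum_{w\in W}x_w=O(|W|/\rho)$. Since $d(u)=n-1-|W|$ and $\rho\ge\sqrt{n-1}$, this forces $|W|=O(\sqrt n)$, then $\rho=\sqrt n+O(1)$ and $d(u)=n-O(\sqrt n)$; one more pass through the eigen-equations then gives $x_v=O(1/\sqrt n)$ for every $v\neq u$.

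Finally I would upgrade ``$G$ is close to $K_1\vee P_{n-1}$'' to ``$G=K_1\vee P_{n-1}$'' by an exchange argument. If $W\neq\emptyset$, then one of the ``gaps'' between consecutive neighbours of $u$ on the polygon contains a $W$-vertex, and the triangulated sub-polygon sitting in that gap has an ``ear'' (a degree-$2$ vertex of $G$) lying in $W$; call it $z$. Delete $z$ and its two edges and re-insert it as a new vertex adjacent to $u$ and to a polygon-neighbour $p$ of $u$ — since the edge $up$ is on the outer face, the result is again a triangulation of an $n$-gon, hence outerplanar, and now $z$ sits at the end of the path $G[N(u)]$. Because $x_u=1$ while all neighbours involved have weight $O(1/\sqrt n)$, the quadratic form $\mathbf x^{\top}A\mathbf x$ strictly increases, contradicting the extremality of $G$. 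Thus $W=\emptyset$, so $u$ is universal and $G-u$ is a linear forest on $n-1$ vertices; if it were disconnected, joining two of its path-endpoints keeps $K_1\vee(\text{linear forest})$ outerplanar and strictly increases $\rho$, so $G-u=P_{n-1}$ and $G=K_1\vee P_{n-1}$. The bulk of the work — and where a first attempt is most likely to stall — is this last paragraph: one must establish the entry bound $x_v=O(1/\sqrt n)$ with explicit constants, prove that an ear with a low-weight neighbourhood is always available in $W$, and verify that each rerouting simultaneously stays within the outerplanar class and strictly improves the spectral radius; the first-order estimates alone leave slack of order $\sqrt n$, far larger than the order-$1$ gaps one must detect to identify the extremal graph exactly.
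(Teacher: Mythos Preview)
The paper does not prove this statement. Conjecture~\ref{outerplanar conjecture} is quoted only as historical background; immediately after stating it the authors remark that Tait and Tobin \cite{TAIT2017} proved it for sufficiently large $n$, and then move on. The paper's own contributions (Theorems~\ref{th1} and~\ref{th3}) concern outerplanar and planar graphs with a fixed number of \emph{edges}, not vertices, and the extremal outerplanar graph there is the star $S_m$, not $K_1\vee P_{n-1}$. So there is no ``paper's own proof'' of this conjecture to compare your attempt against.

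For what it is worth, your sketch is a faithful outline of the Tait--Tobin argument you cite: reduce to maximal outerplanar graphs, lower-bound $\rho$ via a spanning star, use $K_{2,3}$-freeness to control links and common neighbours, expand $(A^2\mathbf{x})_u$ to force the top-entry vertex $u$ to have degree $n-O(\sqrt{n})$, bootstrap to get $x_v=O(1/\sqrt{n})$ for $v\neq u$, and finish by ear-rotations. You are right that the last step is where the real work lies and that the first-order estimates alone do not close the gap; Tait and Tobin handle this with a more careful iteration, but none of that appears in the present paper.
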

	
	Shortly thereafter, Boots and Royle \cite{Boots1991}, and independently Cao and Vince \cite{CAO1993}, proposed the following conjecture.
	
	\begin{conjecture}[\cite{Boots1991}\cite{CAO1993}]\label{planar conjecture}
		The planar graph on $n \geq 9$ vertices with the maximum spectral radius is $P_2 \vee P_{n-2}$.
	\end{conjecture}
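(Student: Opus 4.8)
The plan is to combine a structural reduction to maximal planar graphs with a fine analysis of the Perron eigenvector, pushing a Tait--Tobin-style stability argument down to the full range $n \geq 9$. Throughout, let $G$ be a planar graph on $n$ vertices of maximum spectral radius, write $\rho = \rho(G)$, and let $x$ be its Perron eigenvector normalized by $\max_v x_v = 1$. Since adding an edge to a connected graph strictly increases the spectral radius, and since every planar graph on $n \geq 3$ vertices is a spanning subgraph of some triangulation, $G$ must be a maximal planar graph with exactly $3n-6$ edges; in particular every face is a triangle and $G$ is $3$-connected. Note that the conjectured extremal graph $P_2 \vee P_{n-2}$ is itself such a triangulation with $3n-6$ edges and two apex vertices of degree $n-1$, so the task reduces to singling it out among all triangulations on $n$ vertices.

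First I would establish the coarse spectral scale. A direct quotient-matrix estimate gives $\rho(P_2 \vee P_{n-2}) = (1+o(1))\sqrt{2n}$, so by extremality $\rho \geq (1-o(1))\sqrt{2n}$. Feeding this into the eigen-equation $\rho x_v = \sum_{w \sim v} x_w$ and using the planar degree constraints $x_v \leq d(v)/\rho$ together with $\sum_v d(v) \leq 6n$, one sees that only $O(\sqrt{n})$ vertices can have degree $\Omega(\sqrt{n})$, hence only $O(\sqrt{n})$ vertices can carry Perron weight bounded away from zero. The crucial quantitative step is then to sharpen this, via second-moment estimates of Tait--Tobin type, to prove that there are exactly two heavy vertices $a, b$ with $x_a, x_b = 1 - o(1)$, that $a$ and $b$ are adjacent, and that each is adjacent to all of the remaining vertices. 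The last adjacency claim is forced by a local-switching argument: if some $v \not\sim a$ while $x_a$ is large and $x_v$ small, then rerouting an edge at $v$ to $a$ (preserving planarity through the triangulation structure) strictly increases $\rho$, contradicting extremality.

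Once the two-apex structure is in place, I would analyze the residual graph $H = G - \{a,b\}$. Since $K_2 \vee H$ is planar exactly when $H$ is a linear forest (a disjoint union of paths) — any vertex of degree $3$ in $H$, or any cycle, produces a forbidden $K_{3,3}$ or violates the planar edge bound $3(n)-6$ — the graph $H$ must be a union of paths. Because $G$ is a triangulation with exactly $3n-6$ edges and $a,b$ are joined to everything, $H$ must in fact be a single Hamiltonian path on its $n-2$ vertices, with the only remaining freedom being the linear order. Restricting the eigen-equations to $V(H)$, where each $x_v \approx (x_a + x_b)/\rho$ is nearly constant, reduces the optimization of $\rho$ to maximizing $\sum_{vw \in E(H)} x_v x_w$ over admissible orderings; a rearrangement inequality, placing larger-weight vertices adjacent to one another, shows the path $P_{n-2}$ is optimal and yields precisely $P_2 \vee P_{n-2}$, settling the conjecture for large $n$.

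The main obstacle is the range of small $n$. Stability and continuity arguments inherently lose control for $n$ near $9$, where the $o(1)$ error terms are not negligible and the ``exactly two heavy vertices'' dichotomy is not forced by asymptotics alone. I therefore expect the decisive difficulty to be twofold: (i) sharpening every estimate above so that the threshold $N_0$ beyond which the stability argument applies becomes an explicit and genuinely small constant; and (ii) dispatching the finite residual range $9 \leq n < N_0$, either by an exhaustive search over triangulations (feasible only if $N_0$ is small) or by a tailored monotonicity argument comparing $\rho(P_2 \vee P_{n-2})$ directly against each competing triangulation through an edge-by-edge eigenvector-perturbation inequality. Closing the gap between the asymptotic regime and the small-$n$ regime \emph{without} leaving a computationally intractable window is, in my view, the heart of the problem.
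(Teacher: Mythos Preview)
The paper does not contain a proof of this statement. It is recorded as Conjecture~\ref{planar conjecture} in the introduction, attributed to Boots--Royle and Cao--Vince, and the paper simply notes that Tait and Tobin~\cite{TAIT2017} resolved it for sufficiently large $n$. The paper's own contributions concern a \emph{different} extremal problem, namely planar and outerplanar graphs with a fixed number of \emph{edges} rather than vertices (Theorems~\ref{th1} and~\ref{th3}); there is therefore no proof here against which to compare your proposal.

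Your outline is a faithful sketch of the Tait--Tobin approach for large $n$: locate two heavy Perron-weight vertices, force them to be apex vertices by local switching, and then optimize over the residual linear forest. You also correctly isolate the genuine obstruction---the stability estimates degrade for small $n$, and pushing the threshold all the way down to $n=9$ is precisely what is not known. Your proposal is thus not a proof of the conjecture but an accurate summary of the state of the art together with a plan whose decisive step (closing the gap to $n=9$ via explicit constants plus a finite check, or via a new monotonicity argument) remains unexecuted. That is not a mathematical error on your part; it reflects the fact that the statement is presented in the paper as an open conjecture, not a theorem with a proof.
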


	In 2017, Tait and Tobin \cite{TAIT2017} proved  these conjectures  when $n$ is sufficiently large.
	In this paper, we focus on the spectral extremal problems of  outerplanar and planar graphs with given size.
	
	In Theorem \ref{le1}, let \( r = 2 \). This leads to the conclusion that the spectral extremal graph 	on \( m \geq 64 \) edges
	for  \( K_{2,3} \)
   is  $S_m$. Given that a star graph is also \( K_4 \)-minor-free and \( K_{2,3} \)-minor-free, we deduce the spectral extremal graph  of outerplanar graphs as follows.
	\begin{theorem}\label{th1}
		Let $ G $ be a outerplanar graph with size $ m \geq 64$, then $ \rho(G) \leq \sqrt{m} $, and equality holds if and only if $ G $ is a star.
	\end{theorem}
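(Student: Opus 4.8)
The plan is to deduce Theorem~\ref{th1} directly from Theorem~\ref{le1} by observing that the class of outerplanar graphs is contained in the class of $K_{2,3}$-free graphs. First I would invoke the classical forbidden-minor characterization of outerplanarity (Chartrand--Harary): a graph is outerplanar if and only if it contains neither $K_4$ nor $K_{2,3}$ as a minor. In particular, an outerplanar graph $G$ cannot contain a subgraph isomorphic to $K_{2,3}$, since such a subgraph would already witness a $K_{2,3}$-minor; hence $G$ is $K_{2,3}$-free.

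With this reduction in place, I would apply Theorem~\ref{le1} with $r = 2$. The hypothesis $m \geq 64$ is precisely the instance $m \geq 16 r^2$ of that theorem when $r = 2$, so we immediately obtain $\rho(G) \leq \sqrt{m}$, with equality if and only if $G$ is a star. This settles the upper bound and the characterization of the extremal graph among $K_{2,3}$-free graphs, a fortiori among outerplanar graphs.

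It then remains only to confirm that equality is attainable inside the class of outerplanar graphs, which is routine: the star $S_m = K_{1,m}$ is a tree, hence a forest, hence outerplanar, and its adjacency matrix has nonzero eigenvalues $\pm\sqrt{m}$, so $\rho(S_m) = \sqrt{m}$. Therefore $S_m$ is the unique outerplanar graph of size $m \geq 64$ with $\rho(G) = \sqrt{m}$. Since the whole argument is a one-line consequence of Theorem~\ref{le1}, there is no genuine obstacle; the only point meriting care is verifying that the stated threshold $m \geq 64$ corresponds exactly to $r = 2$ in the $m \geq 16r^2$ hypothesis, and that the passage from ``no $K_{2,3}$-minor'' to ``no $K_{2,3}$-subgraph'' is valid (which it trivially is, as a subgraph is a minor).
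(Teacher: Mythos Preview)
Your proposal is correct and follows essentially the same approach as the paper: the paper derives Theorem~\ref{th1} directly from Theorem~\ref{le1} with $r=2$, noting that outerplanar graphs are $K_{2,3}$-minor-free (hence $K_{2,3}$-free) and that the star $S_m$ is itself outerplanar. Your write-up is in fact more explicit than the paper's one-sentence deduction, but the underlying argument is identical.
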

	
	For a  graph $F$, let $\mathrm{SPEX}_{\mathcal{OP}}(m, F)$ ($\mathrm{SPEX}_{\mathcal{P}}(m, F)$) denote the set of graphs attaining the maximum spectral radius among all $F$-free outerplanar (planar) graphs with $m$ edges and no isolated vertices.

	If a graph $F$ is not a subgraph of a star graph $S_m$, then by Theorem \ref{th1},
	$S_m$ is the unique graph in
	$\mathrm{SPEX}_{\mathcal{OP}}(m, F)$. Since $ P_k \ (k \geq 4 ),  C_l\ (l \geq 3) $, and $ tK_2\ (t \geq 2) $ are not subgraphs of $ S_m $, we have the following corollary.
	
	\begin{corollary}
		For $ m \geq 64 $, $ k \geq 4 $, $ l \geq 3 $, and $ t \geq 2 $,
		\begin{align*}
			\mathrm{SPEX}_{\mathcal{OP}}(m, P_k) &= \mathrm{SPEX}_{\mathcal{OP}}(m, C_l) = \mathrm{SPEX}_{\mathcal{OP}}(m, tK_2) = \{S_m\}.
		\end{align*}
	\end{corollary}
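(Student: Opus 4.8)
The plan is to deduce the corollary directly from Theorem~\ref{th1}, so almost all of the work is already done. The one combinatorial fact I would isolate first is this: a graph $F$ is a subgraph of the star $S_m=K_{1,m}$ if and only if, after deleting isolated vertices, $F$ is itself a single star $K_{1,j}$ with $0\le j\le m$. This holds because every edge of $K_{1,m}$ is incident with its centre, so any subgraph of $K_{1,m}$ that uses at least one edge is connected and has a dominating vertex. I would then verify that none of $P_k$ $(k\ge 4)$, $C_l$ $(l\ge 3)$, $tK_2$ $(t\ge 2)$ has this form: $P_k$ with $k\ge 4$ has diameter at least $3$ whereas a star has diameter at most $2$; $C_l$ contains a cycle whereas $S_m$ is acyclic; and $tK_2$ with $t\ge 2$ is disconnected whereas a nonempty star is connected. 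Hence $S_m$ is $F$-free for each of these three choices of $F$.

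Next I would record that $S_m$ is an outerplanar graph with $m$ edges and no isolated vertices, and that $\rho(S_m)=\sqrt{m}$. Consequently $S_m$ is an admissible competitor in each of the three families, so the maximum spectral radius over $F$-free outerplanar graphs with $m$ edges and no isolated vertices is at least $\sqrt{m}$. Conversely, any such graph is in particular an outerplanar graph with $m$ edges, so Theorem~\ref{th1} gives $\rho\le\sqrt{m}$, with equality only if the graph is a star; and the unique star with $m$ edges and no isolated vertices is $S_m$. Combining the two bounds yields $\mathrm{SPEX}_{\mathcal{OP}}(m,P_k)=\mathrm{SPEX}_{\mathcal{OP}}(m,C_l)=\mathrm{SPEX}_{\mathcal{OP}}(m,tK_2)=\{S_m\}$ for all $m\ge 64$, $k\ge 4$, $l\ge 3$, $t\ge 2$.

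There is essentially no obstacle here. The only point that requires a little care is the no-isolated-vertices convention built into the definition of $\mathrm{SPEX}_{\mathcal{OP}}$; this is exactly what forces the extremal graph to be $S_m$ itself rather than $S_m$ with some isolated vertices adjoined (which would share the spectral radius $\sqrt m$ but are excluded by the definition). All the substantive content lies in Theorem~\ref{th1} — equivalently, the $r=2$ case of Theorem~\ref{le1} — which is assumed.
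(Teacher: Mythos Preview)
Your proposal is correct and follows essentially the same approach as the paper: the paper simply observes that $P_k$ $(k\ge 4)$, $C_l$ $(l\ge 3)$, and $tK_2$ $(t\ge 2)$ are not subgraphs of $S_m$, so $S_m$ is an $F$-free outerplanar competitor, and then invokes Theorem~\ref{th1} to conclude it is the unique extremal graph. Your write-up merely spells out in more detail why these three graphs fail to embed in a star and handles the no-isolated-vertices convention explicitly.
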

	
	Let $G_m$ be a graph defined as follows. If $m$ is odd, $G_m = K_2 \vee \frac{m-1}{2} K_1$; if $m$ is even, $G_m = K_1 \vee \left(S_{\frac{m-2}{2}} \cup K_1\right)$.
	\begin{theorem}\label{th3}
		Let $G$ be a planar graph with $m$ edges. Suppose $m$ is sufficiently large, then $\rho(G) \leq \rho(G_m)$, with equality holding if and only if $G \cong G_m$. \end{theorem}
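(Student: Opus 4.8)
\textbf{Proof proposal for Theorem \ref{th3}.}

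The plan is to follow the now-standard two-phase strategy for spectral extremal problems with fixed size: first establish a rough lower bound on $\rho(G)$ by exhibiting a good planar graph, then use that bound to force strong structural constraints on any extremal graph, and finally identify $G_m$ exactly by a local perturbation argument. Let $G$ be extremal and let $\mathbf{x}$ be the Perron vector of $A(G)$, normalized so that $\max_v x_v = x_{u^*} = 1$ for some vertex $u^*$. Since $G_m$ is planar (it is obtained from a planar bipartite-like skeleton), we have $\rho := \rho(G) \geq \rho(G_m)$; a direct computation with the quotient matrix of $G_m$ gives $\rho(G_m) \geq \sqrt{m} - O(1)$, more precisely $\rho^2 \geq m - c$ for an absolute constant $c$. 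The first key step is to push this into the inequality $\rho^2 x_{u^*} = \sum_{v \sim u^*} d(v) - (\text{something}) \leq \ldots$; concretely, from $\rho^2 x_v = \sum_{w} (A^2)_{vw} x_w$ and the fact that in a planar graph $e(N(v)) \leq 2d(v) - 3$ (the neighborhood induces a planar, hence sparse, graph) together with $\sum_{v\sim u^*} d(v) \le m + d(u^*)$, we get $\rho^2 \leq m + d(u^*) - 1 + \sum_{v \sim u^*}(x_v - 1)\cdot(\ldots)$. Combining with $\rho^2 \geq m - c$ forces both $d(u^*)$ to be large and the Perron entries of $N(u^*)$ to be close to... no wait — it forces most neighbors $v$ of $u^*$ to have $x_v$ small, i.e. $u^*$ dominates. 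The upshot of this phase: there is a vertex $u^*$ adjacent to almost all edges, $d(u^*) = m/2 - o(m)$ roughly, and the vertices outside $N[u^*]$ contribute negligibly.

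The second phase refines the structure around $u^*$ and a second high-weight vertex. Using $\rho x_{u^*} = \sum_{v \sim u^*} x_v$ and the bound just obtained, one shows that the set $W$ of neighbors $v$ of $u^*$ with $x_v$ bounded below by a constant is small and in fact, by planarity applied to $G[N(u^*)]$, the graph $G[N[u^*]]$ is close to $K_1 \vee (\text{matching} + \text{isolated vertices})$ or $K_2 \vee (\text{independent set})$. Here the parity of $m$ enters: a planar graph in which one vertex $u^*$ sees nearly everything and the edges not at $u^*$ form a planar graph on $N(u^*)$ — by Euler's formula the number of such extra edges is at most a linear amount, but to \emph{maximize} $\rho$ one wants to concentrate weight, and the two candidate configurations are exactly $K_2 \vee tK_1$ (each of the $t$ degree-2 vertices contributing symmetrically) and $K_1 \vee (S_{t} \cup K_1)$ (one heavy neighbor forming a star). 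A careful comparison of the characteristic polynomials / Perron values of these two families as a function of $m$, done via the $2\times 2$ or $3\times 3$ quotient matrices, shows $K_2 \vee \frac{m-1}{2}K_1$ wins when $m$ is odd (so that $\frac{m-1}{2}$ is an integer and the graph has exactly $m$ edges) and $K_1 \vee (S_{\frac{m-2}{2}} \cup K_1)$ wins when $m$ is even (to absorb the parity via the extra isolated-in-the-link vertex $K_1$). The final step is a clean local-switching / edge-moving argument: starting from the near-extremal structure, any edge not conforming to $G_m$ can be rerouted to strictly increase $\rho$ (using the Perron-vector criterion $\rho(G') > \rho(G)$ when $\mathbf{x}^\top(A(G') - A(G))\mathbf{x} > 0$), which pins down $G \cong G_m$.

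The main obstacle I expect is the \emph{tie-breaking between the two candidate graphs and the parity bookkeeping}: both $K_2 \vee \frac{m-1}{2}K_1$ and $K_1 \vee (S_{(m-2)/2}\cup K_1)$ have spectral radius $\sqrt{m}(1 + o(1))$ with the difference appearing only in lower-order terms, so one needs sharp (not merely asymptotic) estimates on $\rho$ of each, as a function of $m$, to see which dominates in each parity class — and to rule out a third family (e.g. $K_1 \vee (S_t \cup K_2)$, or $K_2 \vee (tK_1 \cup \text{one edge})$) that might sneak in at the same order. A secondary difficulty is making the first-phase stability argument quantitative enough: one must show not just that $u^*$ has large degree but that the "error" graph $G - u^*$ minus a bounded core is a forest of tiny components, which requires carefully iterating the eigenvalue-equation bound and invoking planarity ($e(H) \leq 3|V(H)| - 6$) on the right subgraphs. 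Once the structure is nailed down to "$K_1$ or $K_2$ joined to an almost-independent set," the final perturbation step is routine.
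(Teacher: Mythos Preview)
Your overall architecture --- lower bound $\rho>\sqrt{m}$, then structural constraints via the eigenvector equation, then local switching --- matches the paper's, but the structural picture you aim for is off, and the key lemma is missing. The extremal graph $G_m$ does \emph{not} have one vertex ``adjacent to almost all edges'': in $K_2\vee\frac{m-1}{2}K_1$ each apex is incident to only $\frac{m+1}{2}$ of the $m$ edges, and the even case is similar. The paper's proof hinges on producing \emph{two} vertices $u',u''$ with Perron entry near~$1$ (specifically $x_{u''}>0.98$), and the pivot that makes this possible is an \emph{upper} bound $d_G(u')\le 0.505m$ (their Lemma~3.6). That upper bound is obtained by a switching argument: if $d_G(u')>0.505m$ then one shows $x_u+x_v<1$ for all $u,v\neq u'$, and replacing every edge not through $u'$ by a new pendant at $u'$ yields a star $S_m$ with $\rho=\sqrt{m}<\rho(G)$, a contradiction. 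Your first phase produces only a \emph{lower} bound on $d(u^*)$ and no mechanism forcing a second large-entry vertex; you mention ``a second high-weight vertex'' in phase two but give no route to its existence. Without the analogue of Lemma~3.6 the reduction to the $\{u',u''\}$-skeleton does not close.

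Secondly, the obstacle you flag as ``main'' --- a quotient-matrix comparison of the odd and even candidates, plus ruling out nearby families like $K_2\vee(tK_1\cup K_2)$ --- is not needed and does not appear in the paper. Once one knows $x_{u'}=1$, $x_{u''}>0.98$, and $x_u<0.04$ for every other vertex, straight switching arguments (comparing against the Perron vector of the \emph{target} graph, not just against $\mathbf{x}$) show that $E(R)=\emptyset$ for $R=N(u')\cap N(u'')$, that every vertex outside $\{u',u''\}\cup R$ is a pendant at $u'$, that there is at most one such pendant, and that $u'u''\in E(G)$. At that point the graph \emph{is} $G_m$; the parity of $m$ only records whether the leftover pendant exists, so no tie-breaking spectral computation is required. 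Conversely, the switching steps you call ``routine'' are where the real work sits, and they rely essentially on having both $x_{u'},x_{u''}$ near $1$ simultaneously.
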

	
	Similarly, if a graph $ F $ is not a subgraph of $ G_m $, then by Theorem \ref{th3},  $G_m$ is the unique graph in $\mathrm{SPEX}_{\mathcal{P}}(m, F)$. Since  cycles $ C_l \ (l \geq 5),  K_4 $, and $tK_2 \ (t \geq 3)$ are not subgraphs of $G_m $, we get the following corollary.
	
	\begin{corollary}
		For sufficiently large $m$,  $ l \geq 5 $, and $ t \geq 2$,
		\begin{align*}
			\mathrm{SPEX}_{\mathcal{P}}(m, C_l) &= \mathrm{SPEX}_{\mathcal{P}}(m, K_4) = \mathrm{SPEX}_{\mathcal{P}}(m, tK_2) = \{G_m\}.
		\end{align*}
	\end{corollary}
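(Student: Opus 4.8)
The plan is to deduce the corollary directly from Theorem \ref{th3} by a monotonicity-of-optimum principle, so that essentially no new spectral estimate is required; the entire content then reduces to checking that each of the three forbidden graphs fails to embed in $G_m$. Concretely, fix $F\in\{C_l\ (l\ge 5),\,K_4,\,tK_2\ (t\ge 3)\}$ and let $\mathcal{B}$ denote the class of $F$-free planar graphs with $m$ edges and no isolated vertices, which is a subclass of the class $\mathcal{A}$ of all planar graphs with $m$ edges and no isolated vertices. Suppose we have verified that $F$ is not a subgraph of $G_m$; then $G_m$ is $F$-free, and since $G_m$ is planar with $m$ edges and no isolated vertex it belongs to $\mathcal{B}$. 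For any $H\in\mathcal{B}\subseteq\mathcal{A}$, Theorem \ref{th3} gives $\rho(H)\le\rho(G_m)$ with equality if and only if $H\cong G_m$. Hence $G_m$ attains the maximum spectral radius over $\mathcal{B}$ and is the unique graph doing so, that is, $\mathrm{SPEX}_{\mathcal{P}}(m,F)=\{G_m\}$. Thus it remains only to establish the three structural non-containments.

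For the verification I would analyse the two parities of $G_m$ in parallel. In the odd case $G_m=K_2\vee\frac{m-1}{2}K_1$, let $\{u,v\}$ be the $K_2$ and $I$ the independent set of $\frac{m-1}{2}$ vertices. Every edge of $G_m$ meets $\{u,v\}$, so $\{u,v\}$ is a vertex cover of size $2$ and therefore $\nu(G_m)=2$; a clique contains at most one vertex of $I$, so $\omega(G_m)=3$; and on any cycle two vertices of $I$ cannot be consecutive, so with only the two separators $u,v$ available a cycle has at most two $I$-vertices and hence length at most $4$. In the even case $G_m=K_1\vee(S_{\frac{m-2}{2}}\cup K_1)$, let $a$ be the apex, $c$ the star centre, and $p$ the extra isolated vertex. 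Every edge meets $\{a,c\}$, giving $\nu(G_m)=2$; deleting $a$ leaves the forest $S_{\frac{m-2}{2}}\cup K_1$, whose maximum clique is a single edge, so $\omega(G_m)=3$; and every cycle must pass through $a$, while $G_m-a$ is a forest whose longest path has three vertices, so again every cycle has length at most $4$.

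Combining these facts, $G_m$ contains no clique on four vertices, no cycle of length $l\ge 5$, and (since $\nu(G_m)=2$) no matching $tK_2$ with $t\ge 3$; hence $K_4$, $C_l\ (l\ge5)$ and $tK_2\ (t\ge3)$ are all non-subgraphs of $G_m$, which is exactly what the reduction principle requires. I expect no genuine obstacle here: the only delicate point is the bookkeeping across the two parities of $m$, together with the observation that $\nu(G_m)=2$ forces the matching threshold to be $t\ge 3$ (for $t=2$ the claim would fail, since $G_m$ itself contains $2K_2$), so the matching case should be read with the hypothesis $t\ge 3$ as in the preceding discussion. All remaining steps are immediate consequences of Theorem \ref{th3}.
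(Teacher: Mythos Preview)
Your proof is correct and follows exactly the paper's approach, which is simply to observe (in the sentence preceding the corollary) that $C_l$ $(l\ge 5)$, $K_4$, and $tK_2$ $(t\ge 3)$ are not subgraphs of $G_m$ and then invoke Theorem~\ref{th3}. You are also right to flag the matching threshold: the paper's own justification uses $t\ge 3$, and the case $t=2$ genuinely fails since $G_m$ contains a $2K_2$, so the ``$t\ge 2$'' in the displayed statement is a typo for ``$t\ge 3$''.
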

	
	This paper is organized as follows.
	In Section 2, we introduce some preliminary definitions  and lemmas.
	Sections 3 is dedicated to the proof of Theorem \ref{th3}.
	We conclude this paper with some remarks and a conjecture in the last section.

	\section{Preliminaries}
	In this section, we introduce some notations and  necessary lemmas  which will be used later.
	
	Consider a simple graph $G = (V(G), E(G))$, where $V(G)$ and $E(G)$ represent the vertex set and edge set of $G$, respectively. The number of edges is denoted by $e(G) = |E(G)|$.
	For any vertex $v \in V(G)$, let $N_i(v)$ denote the set of vertices in $G$ at distance $i$ from $v$. Specifically, $N(v) = N_1(v)$ denotes the neighbors of $v$, and $d(v) = |N(v)|$ is the degree of $v$ in $G$. For a subset $S \subseteq V(G)$, $N_S(v)$ refers to the set of neighbors of $v$ within $S$.
	When considering two disjoint subsets $S, T \subseteq V(G)$, $G[S]$ denotes the subgraph induced by $S$, while $G[S, T]$ represents the bipartite subgraph with vertex set $S \cup T$ that includes all edges between $S$ and $T$ in $G$. The number of edges within $S$ is given by $e(S) = |E(G[S])|$, and the number of edges between $S$ and $T$ is $e(S, T) = |E(G[S, T])|$.
	For two graphs $G$ and $H$, the {\em disjoint union} of $G$ and $H$ is denoted by $G\cup H$. The \textit{join} of $G$ and $H$, denoted by $G\vee H$, is the graph obtained from $G \cup H$ by adding all possible edges between $G$ and $H$.
	
	A {\em planar graph} $G$ is a graph that can be drawn on the plane such that the edges of $G$ intersect only at their endpoints. An {\em outerplanar graph} is a planar graph that can be drawn such that all vertices lie on the outer face of the drawing.
	It is well known that a graph  $G$ is planar if and only if it does not contain a $K_5$-minor or $K_{3, 3}$-minor. A graph $G$ is outerplanar if and only if it does not contain a $K_4$-minor or   $K_{3, 2}$-minor.
	For any planar graph $ G $, it is known that\begin{align}\label{pmbds}
		e(S) \leq 3|S| - 6 \quad \text{and} \quad e(S, T) \leq 2(|S| + |T|) - 4,
	\end{align}
	where $ S $ and $ T $ are disjoint subsets of $ V(G) $.

	Let $\mathbf{x}=(x_1,\cdots,x_n)^{\mathrm{T}}$ be an eigenvector corresponding to $\rho(G)$.
	For a vertex $v\in V(G)$, we will use $x_v$ to denote the eigenvector entry of $v$ corresponding to $\mathbf{x}$. With this notation, for any $u\in V(G)$,
	the eigenvector equation becomes
	\begin{align}\label{q1}
		\rho(G)x_u=\sum \limits_{uv\in E(G)}x_v.
	\end{align}
	Furthermore, we also have
	\begin{align}\label{q2}
		\rho(G)^2x_u=d_G(u)x_u + \sum_{v \in N(u)} \sum_{w \in N(v) \setminus \{u\}} x_w .
	\end{align}
	Let $G$ be a graph on $n$ vertices with $m$ edges.  It is well-known that
	\begin{align}\label{rhosx}
		\frac{2m}n\leq\rho(G)\leq\sqrt{2m}.
	\end{align}
	Based on the relationship between the spectral radius and the eigenvector, we can easily derive the following lemma.
	\begin{lemma}\label{sxl}
		Let $ G $ and $ H $ be two graphs with $ V(G) = V(H) $, $ \boldsymbol{x} $ and $ \boldsymbol{y} $ be the eigenvectors corresponding to the spectral radius of  $G$ and $H$, respectively. Then
		$$
		\boldsymbol{x}^\mathrm{T} A(G) \boldsymbol{y} = \sum_{ij \in E(G)} (x_i y_j + x_j y_i),
		$$
		and
		$$
		\boldsymbol{x}^\mathrm{T} \boldsymbol{y} (\rho(H) - \rho(G)) = \boldsymbol{x}^\mathrm{T} (A(H) - A(G)) \boldsymbol{y}.
		$$
	\end{lemma}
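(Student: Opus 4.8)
The plan is to prove Lemma \ref{sxl} directly from the definition of the bilinear form $\boldsymbol{x}^\mathrm{T} A(G) \boldsymbol{y}$ and from basic matrix algebra, since both identities are purely formal consequences of the symmetry of adjacency matrices and the eigenvalue equations. No structural graph theory is needed here; the two statements are algebraic identities.

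For the first identity, I would expand $\boldsymbol{x}^\mathrm{T} A(G) \boldsymbol{y}$ entrywise. Writing $A(G) = (a_{ij})$, we have
\begin{align*}
\boldsymbol{x}^\mathrm{T} A(G) \boldsymbol{y} = \sum_{i} \sum_{j} a_{ij} x_i y_j.
\end{align*}
Since $a_{ij} = 1$ precisely when $ij \in E(G)$ and $a_{ij} = 0$ otherwise, and since $G$ has no loops so $a_{ii} = 0$, the double sum collapses to a sum over ordered pairs $(i,j)$ with $ij \in E(G)$. Each unordered edge $ij$ contributes the two ordered terms $a_{ij} x_i y_j$ and $a_{ji} x_j y_i$; because $A(G)$ is symmetric, $a_{ij} = a_{ji} = 1$, so the contribution of the edge $ij$ is exactly $x_i y_j + x_j y_i$. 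Summing over all edges yields $\boldsymbol{x}^\mathrm{T} A(G) \boldsymbol{y} = \sum_{ij \in E(G)} (x_i y_j + x_j y_i)$, as claimed.

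For the second identity, I would use the fact that $\boldsymbol{x}$ and $\boldsymbol{y}$ are eigenvectors of $A(G)$ and $A(H)$ with eigenvalues $\rho(G)$ and $\rho(H)$ respectively. The key observation is that a scalar equals its own transpose: since $A(G)$ is symmetric, $A(G)^\mathrm{T} = A(G)$, and therefore
\begin{align*}
\boldsymbol{x}^\mathrm{T} A(G) \boldsymbol{y} = (\boldsymbol{x}^\mathrm{T} A(G) \boldsymbol{y})^\mathrm{T} = \boldsymbol{y}^\mathrm{T} A(G)^\mathrm{T} \boldsymbol{x} = \boldsymbol{y}^\mathrm{T} A(G) \boldsymbol{x} = \rho(G)\, \boldsymbol{y}^\mathrm{T} \boldsymbol{x} = \rho(G)\, \boldsymbol{x}^\mathrm{T} \boldsymbol{y}.
\end{align*}
Likewise $\boldsymbol{x}^\mathrm{T} A(H) \boldsymbol{y} = \rho(H)\, \boldsymbol{x}^\mathrm{T} \boldsymbol{y}$, using $A(H) \boldsymbol{y} = \rho(H) \boldsymbol{y}$ directly. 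Subtracting the former from the latter gives
\begin{align*}
\boldsymbol{x}^\mathrm{T} (A(H) - A(G)) \boldsymbol{y} = \rho(H)\, \boldsymbol{x}^\mathrm{T} \boldsymbol{y} - \rho(G)\, \boldsymbol{x}^\mathrm{T} \boldsymbol{y} = \boldsymbol{x}^\mathrm{T} \boldsymbol{y}\, (\rho(H) - \rho(G)),
\end{align*}
which is the second identity.

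There is no genuine obstacle in this lemma, as both parts are routine manipulations; the only point requiring care is to track which eigenvector equation is applied to which factor. For the second identity the essential idea is that $\boldsymbol{x}^\mathrm{T} A(G) \boldsymbol{y}$ can be evaluated in two ways: letting $A(G)$ act to the left on $\boldsymbol{x}^\mathrm{T}$ (via symmetry) produces $\rho(G)\, \boldsymbol{x}^\mathrm{T} \boldsymbol{y}$, while for the $H$-term we let $A(H)$ act to the right on $\boldsymbol{y}$ producing $\rho(H)\, \boldsymbol{x}^\mathrm{T} \boldsymbol{y}$. The shared scalar factor $\boldsymbol{x}^\mathrm{T} \boldsymbol{y}$ is then what makes the difference factor cleanly, which is precisely the form useful for the spectral-radius comparisons carried out in the subsequent sections.
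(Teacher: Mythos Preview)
Your proof is correct; both identities are exactly the routine algebraic manipulations you describe. The paper itself does not give a proof of this lemma---it only remarks that the statement follows easily from the relationship between the spectral radius and the eigenvector---so your argument is the standard justification the authors had in mind.
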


	It is known that $A(G)$ is irreducible nonnegative for a connected graph $G$. From the Perron–
	Frobenius Theorem, there is a unique positive unit eigenvector corresponding to $\rho (G)$, which is
	called the {\sl Perron vector} of G.
	
	The following lemmas will be used in our proof.
	\begin{lemma}[\cite{WXH2005}] \label{yb}
		Let $G$ be a connected graph
		and $(x_1, \ldots , x_n)^{\mathrm{T}}$ be a Perron vector of $G$,
		where the coordinate $x_i$ corresponds to the vertex $v_i$.
		Assume that
		$v_i, v_j \in V(G)$ are vertices such that $x_i \ge x_j$, and $S\subseteq N_G(v_j) \setminus N_G(v_i)$ is non-empty.
		Denote $G'=G- \{v_jv : v\in S\} +
		\{v_iv : v\in S\}$. Then $\rho (G) < \rho (G')$.
	\end{lemma}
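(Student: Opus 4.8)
The plan is to use the Rayleigh quotient, taking the Perron vector $\mathbf{x}=(x_1,\ldots,x_n)^{\mathrm{T}}$ of $G$ as a test vector for the switched graph $G'$. Since $G$ is connected, the Perron--Frobenius theorem guarantees that $\mathbf{x}$ is entrywise positive, and I may normalize it so that $\mathbf{x}^{\mathrm{T}}\mathbf{x}=1$; then $\mathbf{x}^{\mathrm{T}}A(G)\mathbf{x}=\rho(G)$. First I would record the effect of the switching operation on this quadratic form. By construction $A(G')$ is obtained from $A(G)$ by deleting the edges $\{v_jv:v\in S\}$ and inserting the edges $\{v_iv:v\in S\}$; because $S\subseteq N_G(v_j)\setminus N_G(v_i)$ (and we may assume $v_i\notin S$, so that no loop is created), these insertions genuinely add new edges while the deletions remove existing ones. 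Applying the bilinear expansion of Lemma~\ref{sxl} with both vectors equal to $\mathbf{x}$, so that $\mathbf{x}^{\mathrm{T}}A(H)\mathbf{x}=\sum_{pq\in E(H)}2x_px_q$ for any graph $H$ on the same vertex set, I obtain
\begin{align*}
  \mathbf{x}^{\mathrm{T}}A(G')\mathbf{x}-\mathbf{x}^{\mathrm{T}}A(G)\mathbf{x}
  =\sum_{v\in S}2x_ix_v-\sum_{v\in S}2x_jx_v
  =2(x_i-x_j)\sum_{v\in S}x_v.
\end{align*}
Since $x_i\ge x_j$ and every $x_v>0$, the right-hand side is nonnegative, so the Rayleigh characterization of the largest eigenvalue yields $\rho(G')\ge \mathbf{x}^{\mathrm{T}}A(G')\mathbf{x}\ge \mathbf{x}^{\mathrm{T}}A(G)\mathbf{x}=\rho(G)$.

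The hard part will be upgrading this weak inequality to the strict inequality $\rho(G')>\rho(G)$, since the bound above degenerates to equality precisely when $x_i=x_j$. To handle this I would argue by contradiction: suppose $\rho(G')=\rho(G)$. Then equality must hold throughout the chain above, forcing $\mathbf{x}^{\mathrm{T}}A(G')\mathbf{x}=\rho(G')\,\mathbf{x}^{\mathrm{T}}\mathbf{x}$. Thus the unit vector $\mathbf{x}$ attains the maximum of the Rayleigh quotient of $G'$, and hence (since $A(G')$ is symmetric, and regardless of whether $G'$ is connected) $\mathbf{x}$ is an eigenvector of $A(G')$ for the eigenvalue $\rho(G')=\rho(G)$. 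Comparing the eigenvector equation \eqref{q1} at the vertex $v_i$ in the two graphs, and using $N_{G'}(v_i)=N_G(v_i)\cup S$ as a disjoint union, I would then write
\begin{align*}
  \rho(G)x_i=\sum_{u\in N_{G'}(v_i)}x_u
  =\sum_{u\in N_G(v_i)}x_u+\sum_{v\in S}x_v
  =\rho(G)x_i+\sum_{v\in S}x_v,
\end{align*}
where the last step uses the eigenvector equation for $v_i$ in $G$ itself. This forces $\sum_{v\in S}x_v=0$, contradicting $S\neq\emptyset$ together with the positivity of the Perron vector. Therefore $\rho(G')>\rho(G)$, as claimed.

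Beyond this equality analysis I anticipate no real difficulty; the only points requiring care are purely bookkeeping: verifying that the switching creates neither multi-edges nor loops (which follows from $S\cap N_G(v_i)=\emptyset$ and $v_i\notin S$), and observing that $\mathbf{x}$ remains a legitimate test vector and eigenvector-candidate for $G'$ even if the deletions disconnect it, since the Rayleigh maximization argument never invokes connectivity of $G'$.
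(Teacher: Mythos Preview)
Your argument is correct and is the standard Rayleigh-quotient proof of this edge-switching lemma. Note, however, that the paper does not supply its own proof of Lemma~\ref{yb}: it is quoted verbatim from \cite{WXH2005} and used as a black box, so there is no ``paper's proof'' to compare against. Your write-up would serve perfectly well as a self-contained verification; the only cosmetic point is that the assumption $v_i\notin S$ you flag is in fact forced by the convention that $G'$ is a simple graph (and $v_i\neq v_j$ is automatic since otherwise $S=\emptyset$), so you may state it as an observation rather than an additional hypothesis.
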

	\begin{lemma}[\cite{Li1979}]\label{subgraph}
		Let $G$ be a connected graph. If $G'$ is a proper subgraph of $G$, then $\rho(G')< \rho(G)$.
	\end{lemma}
	
	\section{Proof of Theorem \ref{th3}}

	Throughout this section, we always assume that $G$ is the planar graph with maximum spectral radius on $m$ edges and no isolated vertices.

	The sketch of our proof is as follows: A lower bound on $\rho(G)$ is given by the conjectured extremal example. Using the leading eigenvector of $A(G)$, we deduce that there are two vertices $u', u''$ whose entries in the leading eigenvector are close to $1$. Next we show  that the entries of  leading eigenvector corresponding to vertices in $V\setminus \{u', u''\}$ are very small and the degrees of vertices in $V\setminus \{u', u''\}$ are at most $2$. Based on these properties, we refine the structure of $G$. Finally, we show that it must be the conjectured graph.

	\begin{lemma}
		$G$ is connected.
	\end{lemma}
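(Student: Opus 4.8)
The plan is to argue by contradiction: suppose $G$ is disconnected, so $G = G_1 \cup G_2 \cup \cdots \cup G_t$ with $t \ge 2$ components, each $G_i$ having at least one edge (no isolated vertices). Since $\rho(G) = \max_i \rho(G_i)$, only one component carries the spectral radius; say $\rho(G) = \rho(G_1)$. The idea is to take a vertex in another component and reattach its edges to the Perron vertex of $G_1$, strictly increasing the spectral radius while preserving planarity and the edge count — contradicting the extremality of $G$.

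First I would fix the Perron vector $\mathbf{x}$ of $G_1$ and let $u^\ast$ be a vertex of $G_1$ with maximum entry $x_{u^\ast}$. Pick any component $G_2$ and any vertex $w \in V(G_2)$; let $S = N_G(w)$, which is nonempty. Form $G' = G - \{wv : v \in S\} + \{wu^\ast\}$, i.e.\ delete $w$'s old edges and make $w$ a pendant vertex attached to $u^\ast$. This keeps the number of edges equal to $m$ only if $d(w) = 1$; in general $|S| = d(w)$ may exceed $1$, so instead I would move $w$ together with a judicious set of edges. The cleaner route: repeatedly apply Lemma~\ref{yb}. Concretely, for the whole graph $G$ with Perron vector $\mathbf{x}$ (now of $G$), let $u^\ast$ be a vertex of the component realizing $\rho(G)$ with $x_{u^\ast} = \max_v x_v$; note every vertex $w$ outside that component has $x_w$ determined by its own component's smaller spectral radius, and in fact all entries outside the dominant component could even be forced to behave well. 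Take any edge $wz$ in a non-dominant component. Since $x_{u^\ast} \ge x_w$ and we may choose the neighbor set $S = \{z\} \subseteq N_G(w) \setminus N_G(u^\ast)$ (as $u^\ast$ and $w$ lie in different components, $u^\ast z \notin E(G)$), Lemma~\ref{yb} gives $\rho(G) < \rho(G')$ where $G' = G - wz + u^\ast z$.

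The remaining point is that $G'$ is still planar with the same number of edges and no isolated vertices. The edge count is unchanged by construction. Planarity is preserved because we only moved one endpoint of a single edge onto $u^\ast$: a planar embedding of $G$ can be modified by routing the edge $u^\ast z$ through the face incident to $u^\ast$, or more simply, $G'$ is a subgraph of $G_1' \vee \{z\}$-type gluing — I would just observe that attaching a single new edge to any vertex of a planar graph keeps it planar (place $z$ in a face incident to $u^\ast$). One must check $w$ does not become isolated; if $d_G(w) = 1$ then after removing $wz$, $w$ is isolated, so in that case instead move $w$ itself: set $G' = G - wz + wu^\ast$, which by Lemma~\ref{yb} applied with the roles arranged (or directly: $G'$ has $w$ pendant at $u^\ast$, and $\rho$ strictly increases since we are embedding the dominant component into a strictly larger connected graph, using Lemma~\ref{subgraph}). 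Either way $G'$ is a planar graph with $m$ edges, no isolated vertices, and $\rho(G') > \rho(G)$, contradicting the choice of $G$.

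The main obstacle is the bookkeeping around low-degree vertices and isolated-vertex creation: one must split into the cases $d_G(w) \ge 2$ (delete and reattach an edge, using Lemma~\ref{yb}) and $d_G(w) = 1$ (relocate the pendant vertex, using Lemma~\ref{subgraph} after noting the new graph properly contains a connected supergraph of the dominant component). I expect both cases to be routine once set up, and the planarity check to be immediate since only one edge is being rerouted to a single target vertex.
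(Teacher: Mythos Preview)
There is a genuine gap: Lemma~\ref{yb} is stated only for \emph{connected} $G$, and that hypothesis is exactly what you are trying to contradict. Nor can you salvage its proof in this setting: for a disconnected $G$ a nonnegative eigenvector for $\rho(G)$ may be taken to vanish off the dominant component, so if $w,z$ lie in a non-dominant component then $x_w=x_z=0$ and the Rayleigh quotient is unchanged by the switch $G-wz+u^\ast z$; no strict inequality comes out of that computation. Your hand-wave that ``all entries outside the dominant component could even be forced to behave well'' does not address this.

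The paper avoids the issue by using only Lemma~\ref{subgraph}. Pick any $u\in V(G_1)$ and $v\in V(G_2)$, add the edge $uv$, delete an arbitrary edge of $G_2$, and discard any isolated vertices created. The resulting graph $G'$ is planar with $m$ edges, and $G_1+uv$ is a connected subgraph of $G'$ that properly contains $G_1$; hence $\rho(G')\ge\rho(G_1+uv)>\rho(G_1)=\rho(G)$. This is precisely the mechanism you invoke in your $d_G(w)=1$ branch, and it is all that is needed in general; the detour through Lemma~\ref{yb} is both unnecessary and, as written, illegitimate. (Incidentally, in your pendant construction $G-wz+wu^\ast$, if $G_2\cong K_2$ then $z$ becomes isolated; the paper simply deletes such vertices, and you should too.)
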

	
	\begin{proof}
		Suppose to the contrary that $G$ is not connected. Assume
		$G_1,\ldots,G_s$ are the components of $G$ and $\rho(G_1)=\max\{\rho(G_i)|\ i\in [s]\}$, then $\rho(G)=\rho(G_1)$ and
		$|V(G_1)|\leq n-2$. Since $G$ has no isolated vertices, we arbitrarily choose two vertices $u\in V(G_1), v\in V(G_2)$, add an edge $uv$ and delete an edge in $G_2$ and isolated vertices. The obtained graph is denoted by $G'$. Then $G'$ is also a planar graph with size $m$, and $G_1\cup \{uv\}$ is a connected subgraph of $G'$. So $\rho(G')\geq \rho(G_1\cup\{uv\})>\rho(G_1)=\rho(G)$ by Lemma \ref{subgraph}, which is a contradiction.
		Therefore, $G$ is connected.
	\end{proof}

	Let
	$\boldsymbol{x} = (x_{1}, x_{2}, \cdots, x_{n})^{\mathrm{T}}$ be the positive eigenvector corresponding to $\rho(G)$, normalized such that $\max_{v \in V(G)} x_v = 1$. Denote by $u' \in V(G)$ the vertex with  maximum eigenvector entry,
	i.e., $x_{u'} = 1$. For any real number $\varepsilon \leq 1$, we define $L^\varepsilon = \left\{u \in V(G) \mid x_u \geq \varepsilon\right\}.$
	\begin{lemma}\label{rhox}
		$\rho(G) >\sqrt{m}$.
	\end{lemma}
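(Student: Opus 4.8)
The plan is to get the bound by exhibiting an explicit planar competitor and invoking the extremality of $G$. Since $G$ has the maximum spectral radius among all planar graphs with $m$ edges and no isolated vertices, it suffices to produce one such graph $H$ with $\rho(H)>\sqrt m$, for then $\rho(G)\ge\rho(H)>\sqrt m$. I would take $H=G_m$: one checks directly that $G_m$ is planar, has exactly $m$ edges, and has no isolated vertex (in the even case the vertex coming from the isolated $K_1$ is still joined to the apex, so it has degree $1$). Thus the task reduces to showing $\rho(G_m)>\sqrt m$, which I would do through an equitable partition of $G_m$.

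When $m$ is odd, $G_m=K_2\vee\frac{m-1}{2}K_1$ admits the equitable bipartition into the two dominating vertices and the $\frac{m-1}{2}$ independent vertices; the quotient matrix is $\left(\begin{smallmatrix}1&\frac{m-1}{2}\\ 2&0\end{smallmatrix}\right)$, so $\rho(G_m)$ is the larger root of $\lambda^2-\lambda-(m-1)$, namely $\rho(G_m)=\tfrac12\bigl(1+\sqrt{4m-3}\bigr)$. Since every quantity involved is positive, $\tfrac12(1+\sqrt{4m-3})>\sqrt m$ is equivalent after squaring to $4m-3>4m-4\sqrt m+1$, i.e.\ to $\sqrt m>1$, which holds.

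When $m$ is even, write $k=\frac{m-2}{2}$ and use the equitable partition of $G_m$ into $\{w\}$ (the apex), $\{c\}$ (the star-centre), the leaf set $L$ with $|L|=k$, and $\{z\}$ (the pendant vertex); the resulting $4\times4$ quotient matrix has characteristic polynomial $f(\lambda)=\lambda^4-m\lambda^2-(m-2)\lambda+\frac{m-2}{2}$, whose largest root is $\rho(G_m)$. Evaluating gives the clean identity $f(\sqrt m)=(m-2)\bigl(\tfrac12-\sqrt m\bigr)$, which is negative for $m\ge 3$; since $f(\lambda)\to+\infty$ as $\lambda\to\infty$, $f$ has a root in $(\sqrt m,\infty)$, so $\rho(G_m)>\sqrt m$. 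The only point needing a little care is this even case: setting up the $4\times4$ quotient matrix correctly, recalling that for an equitable partition the Perron root of the quotient matrix coincides with $\rho(G_m)$, and noting that $f(\sqrt m)<0$ together with $f(+\infty)=+\infty$ really does place a root of $f$ beyond $\sqrt m$; everything else is a short closed-form computation. (If one prefers to avoid the quartic, for even $m$ one may instead add an edge between two degree-$2$ vertices of $K_2\vee\frac{m-2}{2}K_1$ — still a planar graph on $m$ edges — and combine Lemma~\ref{subgraph} with the odd-case formula $\rho(K_2\vee\frac{m-2}{2}K_1)=\tfrac12\bigl(1+\sqrt{4m-7}\bigr)\ge\sqrt m$, valid for $m\ge 4$.)
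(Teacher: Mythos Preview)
Your argument is correct and matches the paper's: both exhibit $G_m$ as a planar competitor and then verify $\rho(G_m)>\sqrt m$, handling the odd case via the explicit formula $\rho\bigl(K_2\vee\tfrac{m-1}{2}K_1\bigr)=\tfrac12(1+\sqrt{4m-3})$. The only cosmetic difference is in the even case, where the paper bypasses your quartic by observing that $K_2\vee\tfrac{m-2}{2}K_1$ is a proper subgraph of $G_m$ (just delete the pendant vertex) and applying Lemma~\ref{subgraph}---precisely the shortcut you sketch in your parenthetical remark.
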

	\begin{proof}Since $ G_m$ is a planar graph with $m$ edges, we have $ \rho(G) \geq \rho(G_m)$.
		Simple calculations show that when $m$ is odd and sufficiently large,
		\begin{align*}
			\rho(G_m) = \rho\left( K_2 \vee \frac{m-1}{2} K_1 \right) = \frac{1 + \sqrt{4m - 3}}{2} > \sqrt{m}.
		\end{align*}
		
		When \( m \) is even and sufficiently large, since the planar graph \( K_2 \vee \frac{m-2}{2} K_1 \) is a proper subgraph of \( K_1 \vee \left( S_{\frac{m-2}{2}} \cup K_1 \right) \), we have
		\begin{align*}
			\rho(G_m) > \rho\left( K_2 \vee \frac{m-2}{2} K_1 \right) = \frac{1 + \sqrt{4m - 7}}{2} > \sqrt{m}.
		\end{align*}
		In conclusion, $\rho(G) > \sqrt{m}$.
	\end{proof}

	\begin{lemma}\label{lr}
		$|L^\varepsilon| < \frac{2\sqrt{m}}{\varepsilon}.$
	\end{lemma}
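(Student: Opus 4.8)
The plan is to bound $|L^\varepsilon|$ by combining the eigenvalue equation with the planarity edge bounds in (\ref{pmbds}). First I would sum the eigenvector equation (\ref{q1}) over all vertices in $L^\varepsilon$, which gives
$$
\rho(G)\sum_{u\in L^\varepsilon}x_u=\sum_{u\in L^\varepsilon}\sum_{uv\in E(G)}x_v
=2\sum_{uv\in E(G[L^\varepsilon])}\!\! (x_u\text{-side})+\sum_{\substack{uv\in E(G)\\u\in L^\varepsilon,\,v\notin L^\varepsilon}}\!\!x_v.
$$
More precisely, $\rho(G)\sum_{u\in L^\varepsilon}x_u=\sum_{uv\in E(G),\,u,v\in L^\varepsilon}(x_u+x_v)+\sum_{uv\in E(G),\,u\in L^\varepsilon,\,v\notin L^\varepsilon}x_v$. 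Since every eigenvector entry is at most $1$, the first sum is at most $2e(L^\varepsilon)$ and the second is at most $e(L^\varepsilon,V\setminus L^\varepsilon)$; crucially, using $x_v<\varepsilon$ for $v\notin L^\varepsilon$ we can even bound the second sum by $\varepsilon\, e(L^\varepsilon,V\setminus L^\varepsilon)$, but the cruder bound already suffices.

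Next I would invoke the planar edge bounds. Writing $t=|L^\varepsilon|$, planarity gives $e(L^\varepsilon)\le 3t-6<3t$ and, more importantly, $e(L^\varepsilon,V\setminus L^\varepsilon)$ is bounded — but here one must be careful, since $V\setminus L^\varepsilon$ may be large. The right move is to note that the total contribution $\sum_{uv\in E(G),\,u\in L^\varepsilon}x_v$ is controlled: every vertex $u\in L^\varepsilon$ has $x_u\ge\varepsilon$, so $\rho(G)\,\varepsilon\,t\le\rho(G)\sum_{u\in L^\varepsilon}x_u$, and on the other side the whole right-hand side is at most $\sum_{u\in L^\varepsilon}d_G(u)\le 2e(G)=2m$ (each edge incident to $L^\varepsilon$ contributes at most $1$ per endpoint, or simply bound $x_v\le 1$ and count degrees). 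Hence $\rho(G)\,\varepsilon\,t\le 2m$. Combining with $\rho(G)>\sqrt m$ from Lemma \ref{rhox} yields $\varepsilon\,t\le 2m/\rho(G)<2m/\sqrt m=2\sqrt m$, i.e. $|L^\varepsilon|=t<2\sqrt m/\varepsilon$, which is exactly the claim.

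The only subtle point — and the step I would check most carefully — is making sure the degree-sum estimate $\sum_{u\in L^\varepsilon}\sum_{uv\in E(G)}x_v\le\sum_{u\in L^\varepsilon}d_G(u)$ is legitimate (it is, since each $x_v\le 1$) and that $\sum_{u\in L^\varepsilon}d_G(u)\le 2m$ (also immediate, since this sum counts each edge of $G$ at most twice). Everything else is a one-line application of Lemma \ref{rhox} and the bound $\rho(G)\le\sqrt{2m}$ is not even needed. An alternative, slightly sharper route that avoids the degree sum is to use the planar bounds directly: split into $e(L^\varepsilon)+e(L^\varepsilon,V\setminus L^\varepsilon)$ and handle the second term by observing each vertex outside $L^\varepsilon$ has bounded... but since no upper bound on $|V\setminus L^\varepsilon|$ is available at this stage, the degree-sum argument above is cleaner. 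I expect no real obstacle here; this is a standard counting lemma and the proof is short.
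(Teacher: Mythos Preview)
Your proposal is correct and, once you settle on the ``right move'' in the second paragraph, it is essentially identical to the paper's proof: bound $\rho(G)\sum_{u\in L^\varepsilon}x_u$ from below by $\rho(G)\,\varepsilon\,|L^\varepsilon|$ and from above by $\sum_{u\in L^\varepsilon}d_G(u)\le 2m$, then invoke $\rho(G)>\sqrt{m}$. The initial detour through the planar edge bounds (\ref{pmbds}) is unnecessary here and the paper does not use it for this lemma; you correctly recognize this and discard it.
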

	\begin{proof}
		From Lemma \ref{rhox} and  (\ref{q1}), for each $u \in L^\varepsilon$, we  derive
		\begin{align}\label{eq2}
			\sqrt{m}\varepsilon < \rho(G) x_u = \sum_{v \in N_G(u)} x_v \leq d_G(u).
		\end{align}
		Summing inequality (\ref{eq2}) over all $u \in L^\varepsilon$, we obtain
		\begin{align*}
			\sqrt{m}\varepsilon \cdot |L^\varepsilon| < \sum_{u \in L^\varepsilon} d_G(u) \leq \sum_{u \in V(G)} d_G(u) \leq 2m,
		\end{align*}
		which implies that $|L^\varepsilon| < \frac{2\sqrt{m}}{\varepsilon}$.
	\end{proof}
	
	\begin{lemma}\label{l1}
		$|L^\frac{1}{1000}| \leq 20000$.
	\end{lemma}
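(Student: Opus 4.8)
The plan is to bound $|L^{1/1000}|$ by feeding the eigenvector equation into the planar edge bounds (\ref{pmbds}). Write $\varepsilon=\frac{1}{1000}$, $W=L^{\varepsilon}$ and $w=|W|$; if $w\le 2$ there is nothing to prove, so I would assume $w\ge 3$ and in fact establish the much stronger bound $w<8000$ for all sufficiently large $m$. By (\ref{eq2}), every $u\in W$ satisfies $d_G(u)\ge\rho(G)x_u>\sqrt m\,\varepsilon$, so the vertices of $W$ are ``high‑degree'', and the point is that planarity cannot accommodate too many of them.

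The main identity is obtained by summing (\ref{q1}) over $W$:
\begin{align*}
\rho(G)\sum_{u\in W}x_u=\sum_{u\in W}\sum_{v\in N_G(u)}x_v=\sum_{v\in V(G)}d_W(v)\,x_v .
\end{align*}
Since $x_u\ge\varepsilon$ on $W$ and $\rho(G)>\sqrt m$ by Lemma \ref{rhox}, the left side is at least $\sqrt m\,\varepsilon\,w$, so everything reduces to a good upper bound on $\sum_{v\in V(G)}d_W(v)x_v$. I would first record the crude global estimate
\begin{align*}
\sum_{v\in V(G)}x_v=\frac{1}{\rho(G)}\sum_{u\in V(G)}d_G(u)x_u\le\frac{2m}{\rho(G)}<2\sqrt m ,
\end{align*}
using $x\le 1$ and Lemma \ref{rhox} again. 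Then split $\bar W:=V(G)\setminus W$ according to the number of neighbours in $W$: let $B=\{v\in\bar W:\ d_W(v)\le 2\}$ and $B'=\{v\in\bar W:\ d_W(v)\ge 3\}$. For the part inside $W$, the planar bound (\ref{pmbds}) gives $\sum_{v\in W}d_W(v)x_v\le 2e(W)\le 6w-12$. For $B$, the bounded multiplicity lets the global estimate do the work: $\sum_{v\in B}d_W(v)x_v\le 2\sum_{v\in B}x_v<4\sqrt m$. For $B'$, planarity instead caps the \emph{number} of such vertices: $G[W,B']$ is bipartite planar and every vertex of $B'$ has degree at least $3$ in it, so (\ref{pmbds}) yields $3|B'|\le e(W,B')\le 2(w+|B'|)-4$, hence $|B'|\le 2w-4$ and $e(W,B')\le 6w-12$, and therefore $\sum_{v\in B'}d_W(v)x_v<\varepsilon\,e(W,B')\le\varepsilon(6w-12)$. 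Adding the three contributions and comparing with the lower bound gives
\begin{align*}
\sqrt m\,\varepsilon\,w<(6w-12)+4\sqrt m+\varepsilon(6w-12)<7w+4\sqrt m ,
\end{align*}
so that $w<\frac{4\sqrt m}{\sqrt m\,\varepsilon-7}<8000$ once $m$ is large enough that $\sqrt m\,\varepsilon>8$. In particular $|L^{1/1000}|\le 20000$.

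The only genuine obstacle is the estimate for $\sum_{v\in\bar W}d_W(v)x_v$: the naive bound $\varepsilon\,e(W,\bar W)\le 2\varepsilon m$ loses a factor of order $\sqrt m$ — which is exactly why the trivial bound $|L^{\varepsilon}|<2\sqrt m/\varepsilon$ of Lemma \ref{lr} is not enough here — and one must treat separately the (possibly many) vertices of $\bar W$ with at most two neighbours in $W$, controlled by the $\ell^1$‑estimate $\sum_v x_v<2\sqrt m$, and the few with three or more, whose number is bounded by $2w-4$ via planarity. Once this split is in place, the remaining steps are routine manipulations of (\ref{q1}), (\ref{rhosx}) and (\ref{pmbds}).
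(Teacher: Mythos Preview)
Your argument is correct and in fact yields a sharper constant ($|L^{1/1000}|<8000$ for large $m$) than the paper's $20000$. The route, however, is genuinely different from the paper's. You sum the \emph{first}-order eigenvalue equation $\rho x_u=\sum_{v\sim u}x_v$ over $W=L^{1/1000}$, rewrite it as $\rho\sum_{u\in W}x_u=\sum_v d_W(v)x_v$, and control the right-hand side by splitting $\bar W$ into vertices with $d_W\le 2$ (handled by the $\ell^1$-bound $\sum_v x_v<2\sqrt m$) and vertices with $d_W\ge 3$ (whose number is at most $2w-4$ by the bipartite planar bound). The paper instead works with the \emph{second}-order identity (\ref{q2}) applied at a single vertex $u$, derives the pointwise inequality (\ref{eq10}) with a free parameter $\varepsilon$, and then runs a two-scale argument (choosing $\varepsilon=\xi=\sqrt{10}\,m^{-1/4}$) to force $d_G(u)\ge m/10000$ for every $u\in L^{1/1000}$; the degree lower bound then gives $|L^{1/1000}|\le 20000$ by summing degrees. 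Your approach is shorter and more transparent for this lemma; the paper's has the side benefit that the intermediate inequality (\ref{eq10}) is reused verbatim in the proofs of Lemmas \ref{du}, \ref{3.6} and \ref{xuu}, so it pays for itself later. One cosmetic point: when $B'=\emptyset$ the strict inequality $\sum_{v\in B'}d_W(v)x_v<\varepsilon\,e(W,B')$ degenerates to $0<0$; the overall chain is still strict via the $B$-term, so no harm is done, but you may want to write $\le$ there.
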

	\begin{proof}
		Let $u$ be any vertex of $G$. For convenience, we denote $L_i^\varepsilon(u) = N_i(u) \cap L^\varepsilon$ and $\overline{L_i^\varepsilon}(u) = N_i(u) \setminus L^\varepsilon$.
		By Lemma \ref{rhox} and  (\ref{q2}), we have
		\begin{align}
			mx_u &< \rho^2(G) x_u = d_G(u)x_u + \sum_{v \in N_1(u)} \sum_{w \in N_1(v) \setminus \{u\}} x_w \nonumber\\[2mm]
			&\leq d_G(u)x_u + \sum_{v \in N_1(u)} \sum_{w \in L_1^\varepsilon(u) \cup L_2^\varepsilon(u)} x_w + \sum_{v \in N_1(u)} \sum_{w \in \overline{L_1^\varepsilon}(u) \cup \overline{L_2^\varepsilon}(u)} x_w, \label{eq6}
		\end{align}
		where the last inequality follows from the fact that $$N_1(v) \setminus \{u\} \subseteq N_1(u) \cup N_2(u) = L_1^\varepsilon(u) \cup L_2^\varepsilon(u) \cup \overline{L_1^\varepsilon}(u) \cup \overline{L_2^\varepsilon}(u).$$
		
		Note that $N_1(u) = L_1^\varepsilon(u) \cup \overline{L_1^\varepsilon}(u)$.  We  obtain
		\begin{align}
			\sum_{v \in N_1(u)} \sum_{w \in L_1^\varepsilon(u) \cup L_2^\varepsilon(u)} x_w &= \sum_{v \in L_1^\varepsilon(u)} \sum_{w \in L_1^\varepsilon(u) \cup L_2^\varepsilon(u)} x_w + \sum_{v \in \overline{L_1^\varepsilon}(u)} \sum_{w \in L_1^\varepsilon(u) \cup L_2^\varepsilon(u)} x_w \nonumber\\[2mm]
			&\leq 2e(L_1^\varepsilon(u)) + e(L_1^\varepsilon(u), L_2^\varepsilon(u)) + \sum_{v \in \overline{L_1^\varepsilon}(u)} \sum_{w \in L_1^\varepsilon(u) \cup L_2^\varepsilon(u)} x_w.\label{eq7}
		\end{align}
		
		Recall that $L_1^\varepsilon(u) \cup L_2^\varepsilon(u) \subseteq L^\varepsilon$. According   Lemma \ref{lr} and  (\ref{pmbds}), we obtain
		\begin{align}
			2e(L_1^\varepsilon(u)) + e(L_1^\varepsilon(u), L_2^\varepsilon(u)) &\leq 2(3|L_1^\varepsilon(u)|-6) + (2(|L_1^\varepsilon(u)| + |L_2^\varepsilon(u)|)-4) \nonumber\\[2mm]
			&< 8|L^\varepsilon| < \frac{16\sqrt{m} }{\varepsilon}.\label{eq8}
		\end{align}
		
		Additionally, note that for each $w \in \overline{L_1^\varepsilon}(u) \cup \overline{L_2^\varepsilon}(u)$, it holds that $x_w < \varepsilon$. Then
		\begin{align}
			\sum_{v \in N_{1}(u)}\sum_{w \in \overline{L_{1}^{\varepsilon}}(u) \cup \overline{L_{2}^{\varepsilon}}(u)} x_{w} &= \sum_{v \in L_{1}^{\varepsilon}(u) \cup \overline{L_{1}^{\varepsilon}}(u)}\sum_{w \in \overline{L_{1}^{\varepsilon}}(u) \cup \overline{L_{2}^{\varepsilon}}(u)} x_{w} \nonumber \\[2mm]
			&< \left(e(L_{1}^{\varepsilon}(u), \overline{L_{1}^{\varepsilon}}(u) \cup \overline{L_{2}^{\varepsilon}}(u)) + 2e(\overline{L_{1}^{\varepsilon}}(u)) + e(\overline{L_{1}^{\varepsilon}}(u), \overline{L_{2}^{\varepsilon}}(u))\right) \cdot \varepsilon \nonumber \\[2mm]
			&\leq 2e(G)\varepsilon = 2m\varepsilon \label{eq9}
		\end{align}
		
		Combining (\ref{eq6})–(\ref{eq9}), we obtain
		\begin{align}
			mx_{u} &< d_{G}(u)x_{u} + \sum_{v \in \overline{L_{1}^{\varepsilon}}(u)}\sum_{w \in L_{1}^{\varepsilon}(u) \cup L_{2}^{\varepsilon}(u)} x_{w} + \left(\frac{16\sqrt{m} }{\varepsilon} + 2m\varepsilon\right).
			\label{eq10}
		\end{align}
		
		Now we will prove that for any vertex $u \in L^{\frac{1}{1000}}$, we have $d_G(u) \geq \frac{m}{10000}$. Suppose to the contrary  that there exists a vertex $z \in L^{\frac{1}{1000}}$ such that $d_G(z) < \frac{m}{10000}$.
		Substituting $u = z$, $\varepsilon=\xi = \sqrt{10}m^{-\frac{1}{4}}$ into
		(\ref{eq10}), we derive
		\begin{align}\label{eq11}
			\frac{m}{1000} < d_G(z)x_{z} + \sum_{v \in \overline{L_1^{\xi}}(z)} \sum_{w \in L_1^{\xi}(z) \cup L_2^{\xi}(z)} x_w + \frac{16m^{\frac{3}{4}}}{\sqrt{10}} + 2\sqrt{10} m^{\frac{3}{4}}.
		\end{align}
		
		Moreover, it can be deduced that
		\begin{align*}
			d_{G}(z)x_{z} + \sum_{v \in \overline{L_{1}^{\xi}}(z)}\sum_{w \in L_{1}^{\xi}(z) \cup L_{2}^{\xi}(z)} x_{w} &\leq d_{G}(z) + e(\overline{L_{1}^{\xi}}(z), L_{1}^{\xi}(z) \cup L_{2}^{\xi}(z)) \nonumber \\
			&< d_{G}(z) + 2(|\overline{L_{1}^{\xi}}(z)| + |L_{1}^{\xi}(z)| + |L_{2}^{\xi}(z)|) \nonumber \\[2mm]
			&\leq 3d_{G}(z) + 2|L^{\xi}|,
		\end{align*}
		where the second inequality follows from (\ref{pmbds}). Since $|N_1(z)| = d_G(z) < \frac{m}{10000}$, $\left|L^{\xi}\right| < \frac{2\sqrt{m}}{\xi}$, and $m$ is  sufficiently large, combining with (\ref{eq11}) we have
		\begin{align*}
			\frac{m}{1000}  &< 3d_{G}(z) + 2|L^{\xi}| + \frac{16m^{\frac{3}{4}}}{\sqrt{10}} + 2\sqrt{10} \cdot m^{\frac{3}{4}} \nonumber \\[2mm]
			&< \frac{3m}{10000} + \frac{4m^{\frac{3}{4}}}{\sqrt{10}} + \frac{16m^{\frac{3}{4}}}{\sqrt{10}} + 2\sqrt{10} \cdot m^{\frac{3}{4}} < \frac{m}{1000},
		\end{align*}
		which is a contradiction. Therefore, for any vertex  $u \in L^\frac{1}{1000}$, we have $d_G(u) \geq \frac{m}{10000}$. Summing this inequality over all $u \in L^\frac{1}{1000}$, we obtain
		\begin{align*}
			|L^\frac{1}{1000}| \cdot \frac{m}{10000} \leq \sum_{u \in L^\frac{1}{1000}} d_G(u) \leq 2e(G) = 2m,
		\end{align*}
		thus $|L^\frac{1}{1000}| \leq 20000$, and the result follows.\end{proof}
	
	For simplicity, we denote $L^\frac{1}{1000}$, $N_i(u) \cap L^\frac{1}{1000}$, and $N_i(u) \setminus L^\frac{1}{1000}$ by  $L$, $L_i(u)$, and $\overline{L_i(u)}$, respectively.
	
	\begin{lemma}\label{du}
		For every $u \in L$, it holds that $ d_G(u) > \left(\frac{x_u}{2} - 0.002\right)m $.
	\end{lemma}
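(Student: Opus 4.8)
The plan is to start from inequality (\ref{eq10}), which holds for every vertex $u$ and every $\varepsilon\le 1$, specialized to $\varepsilon=\tfrac{1}{1000}$, so that $L^{\varepsilon}=L$, $L_i^{\varepsilon}(u)=L_i(u)$ and $\overline{L_i^{\varepsilon}}(u)=\overline{L_i}(u)$. Since each $x_w\le 1$, the double sum in (\ref{eq10}) is at most $e(\overline{L_1}(u),L_1(u)\cup L_2(u))$, so
\[
mx_u< d_G(u)x_u+e\bigl(\overline{L_1}(u),\,L_1(u)\cup L_2(u)\bigr)+16000\sqrt{m}+0.002m .
\]
Thus the whole statement reduces to showing that this bipartite edge count is at most about $d_G(u)$. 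The naive estimate from (\ref{pmbds}) only gives $e(\overline{L_1}(u),L_1(u)\cup L_2(u))\le 2|\overline{L_1}(u)|+2|L_1(u)\cup L_2(u)|-4\le 2d_G(u)+O(1)$, and that spare factor of $2$ is precisely what would break the argument: it would yield only $d_G(u)>(\tfrac{x_u}{3}-o(1))m$, which is too weak.

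To remove the factor $2$ I would use that $u$ is joined to \emph{every} vertex of $\overline{L_1}(u)$, since $\overline{L_1}(u)\subseteq N_1(u)$. Consider the bipartite subgraph $H=G[\,\{u\}\cup(L_1(u)\cup L_2(u)),\ \overline{L_1}(u)\,]$. Its two sides are disjoint: $L_1(u)\cup L_2(u)\subseteq L$ whereas $\overline{L_1}(u)=N_1(u)\setminus L$, and $u\notin N_1(u)\cup N_2(u)$. Moreover the $|\overline{L_1}(u)|$ edges incident to $u$ and the $e(\overline{L_1}(u),L_1(u)\cup L_2(u))$ edges between $L_1(u)\cup L_2(u)$ and $\overline{L_1}(u)$ are all the edges of $H$ and they are edge-disjoint. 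Hence, applying (\ref{pmbds}) to $H$ and using $|L_1(u)\cup L_2(u)|\le |L|\le 20000$ from Lemma \ref{l1},
\[
|\overline{L_1}(u)|+e\bigl(\overline{L_1}(u),L_1(u)\cup L_2(u)\bigr)=e(H)\le 2\bigl(|L_1(u)\cup L_2(u)|+1+|\overline{L_1}(u)|\bigr)-4 ,
\]
so that $e(\overline{L_1}(u),L_1(u)\cup L_2(u))\le |\overline{L_1}(u)|+2|L_1(u)\cup L_2(u)|-2< d_G(u)+40000$.

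Substituting this back gives $mx_u< d_G(u)(x_u+1)+16000\sqrt{m}+0.002m+40000$, and for $m$ sufficiently large $16000\sqrt{m}+40000\le 0.001m$, hence $d_G(u)(x_u+1)>m(x_u-0.003)$. If $x_u\le 0.004$ the claimed inequality is immediate, since then $(\tfrac{x_u}{2}-0.002)m\le 0<d_G(u)$ (as $G$ has no isolated vertices). If $x_u>0.004$, then $x_u+1\le 2$ gives $d_G(u)>\tfrac12 m(x_u-0.003)=\tfrac{x_u}{2}m-0.0015m>(\tfrac{x_u}{2}-0.002)m$, as wanted. The one genuine obstacle is the bipartite edge estimate; the "apex vertex $u$" trick that turns the coefficient $2$ into a $1$ is the essential point, and everything else is routine bookkeeping of the $O(\sqrt m)$ and $O(1)$ error terms and the trivial small-$x_u$ case.
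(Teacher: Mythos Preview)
Your proof is correct, and the essential bipartite estimate $e\bigl(\overline{L_1}(u),L_1(u)\cup L_2(u)\bigr)\le d_G(u)+O(|L|)$ is obtained by a genuinely different (and cleaner) trick than the paper uses. The paper argues combinatorially: it lets $S\subseteq\overline{L_1}(u)$ be the vertices with at least two neighbours in $L_1(u)\cup L_2(u)$ and shows $|S|\le |L_1(u)\cup L_2(u)|^2$, because otherwise two vertices of $L_1(u)\cup L_2(u)$ would share three common neighbours in $S\subseteq N(u)$ and, together with $u$, yield a $K_{3,3}$. This gives the coarser bound $e(\overline{L_1}(u),L_1(u)\cup L_2(u))\le d_G(u)+|L|^3$. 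Your ``apex vertex'' idea—throwing $u$ onto the $L$-side and applying the bipartite planar inequality once—produces the sharper $d_G(u)+2|L|$ with no case analysis, and is arguably more in the spirit of the rest of the section, which uses (\ref{pmbds}) throughout.

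Two minor remarks. First, your closing case-split on $x_u\le 0.004$ is unnecessary: if you bound $d_G(u)x_u\le d_G(u)$ at the start (as the paper does), you obtain $mx_u<2d_G(u)+0.003m$ directly, hence $d_G(u)>(\tfrac{x_u}{2}-0.0015)m$, and no split is needed. Second, the bound $e(S,T)\le 2(|S|+|T|)-4$ in (\ref{pmbds}) tacitly needs $|S|+|T|\ge 3$; in the degenerate cases $\overline{L_1}(u)=\emptyset$ or $L_1(u)\cup L_2(u)=\emptyset$ the quantity you are bounding is zero anyway, so nothing is lost, but it is worth a word.
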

	
	\begin{proof}
		Let $ S$ be a subset of $ \overline{L_1}(u) $ where each vertex has at least two neighbors in $ L_1(u) \cup L_2(u) $. We claim that $ |S| \leq |L_1(u) \cup L_2(u)|^2 $. If $ |L_1(u) \cup L_2(u)| = 1 $, then $ S = \emptyset $, as expected. Now, assume $ |L_1(u) \cup L_2(u)| \geq 2 $. Suppose instead that $ |S| > |L_1(u) \cup L_2(u)|^2 $. Given that vertices in $ S $ have only $ \binom{|L_1(u) \cup L_2(u)|}{2} $ choices for selecting two neighbors from $ L_1(u) \cup L_2(u) $, we can find two vertices in $ L_1(u) \cup L_2(u) $ that share at least $ \lceil \frac{|S|}{\binom{|L_1(u) \cup L_2(u)|}{2}} \rceil \geq 3 $ common neighbors in $ S $. Moreover, since $ u \notin L_1(u) \cup L_2(u) $ and $ S \subseteq \overline{L_1}(u) \subseteq N_1(u) $, there exists a copy of $ K_{3, 3} $ in  $ G $ , which is impossible as $ G $ is a planar graph. Therefore, $ |S| \leq |L_1(u) \cup L_2(u)|^2 $. Then
		\begin{align}
			e(\overline{L_1}(u), L_1(u) \cup L_2(u)) &= e(\overline{L_1}(u) \setminus S, L_1(u) \cup L_2(u)) + e(S, L_1(u) \cup L_2(u)) \nonumber \\
			&\leq |\overline{L_1}(u) \setminus S| + |L_1(u) \cup L_2(u)| \cdot |S| \nonumber \\
			&\leq d_G(u) + (20000)^3 \nonumber \\
			&< d_G(u) + 0.001m, \label{eq12}
		\end{align}
		where the second to last inequality follows from $\overline{L_1}(u) \subseteq N_1(u)$ and $ |L_1(u) \cup L_2(u)| \leq |L| \leq 20000 $ (by Lemma \ref{l1}), and the last inequality holds as $m$ is sufficiently large. Substituting $\varepsilon = \frac{1}{1000}$ into (\ref{eq10}), and combining  with (\ref{eq12}), we  derive
		\begin{align*}
			mx_u < d_G(u) + \left(d_G(u) + 0.001m\right) + 0.003m.
		\end{align*}
		Thus
		\begin{align*}
			d_G(u) > \frac{mx_u}{2} - \frac{0.001m + 0.003m}{2} = \left(\frac{x_u}{2} - 0.002\right)m.
		\end{align*}
	\end{proof}
	\begin{lemma}\label{3.6}
		$ d_G(u') \leq 0.505m $.
	\end{lemma}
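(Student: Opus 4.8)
The plan is to argue by contradiction. Suppose $d_G(u')\ge 0.505m$; I will contradict Lemma \ref{rhox}. The point of entry is the trivial inequality $d_G(u')\le n-1$, where $n=|V(G)|$, so it suffices to bound $n$ from above by roughly $\tfrac12 m$, and for that one needs a spectral upper bound on $\rho(G)$ that is sensitive to the minimum degree rather than the crude $\rho(G)\le\sqrt{2m}$.

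First I would reduce to the minimum-degree-two case. Using Lemma \ref{yb} together with the extremality of $G$, every vertex of degree $1$ in $G$ must be adjacent to $u'$ (otherwise re-attaching it to $u'$ strictly increases $\rho$ and keeps the graph planar, since it only relocates a pendant); a further short exchange argument (delete one pendant edge at $u'$ and a "light" edge of $G$ — one minimizing the product of its endpoints' eigenvector entries — and re-wire so as to strictly increase $\rho$ while preserving planarity) shows there is at most one such vertex. Let $G^-$ be $G$ with this pendant, if any, removed; then $G^-$ is connected, planar, has minimum degree $\ge 2$, and since deleting a pendant removes exactly one vertex and one edge, $8e(G^-)-8|V(G^-)| = 8m-8n$.

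Next I would apply the sharp bound for graphs of minimum degree at least two: for a connected graph $H$ with $\delta(H)\ge 2$ one has $\rho(H)\le \tfrac12\bigl(1+\sqrt{8e(H)-8|V(H)|+9}\bigr)$, which is the $\delta=2$ case of the Hong, Shu and Fang inequality and is attained by $K_2\vee tK_1$. Applying it to $H=G^-$, and combining with $\rho(G)\le\rho(G^-)+1$ (from $A(G)=A(G^-\cup K_1)+A(K_2)$ and Weyl's inequality), gives $\rho(G)\le \tfrac32+\tfrac12\sqrt{8m-8n+9}$. Plugging in $\rho(G)>\sqrt m$ from Lemma \ref{rhox} and squaring yields $n<\tfrac{m}{2}+\tfrac32\sqrt m+O(1)$, whence $d_G(u')\le n-1<0.505m$ for $m$ sufficiently large — the desired contradiction.

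The main obstacle is the pendant-vertex control in the reduction step: the exchange argument needs both a sufficiently heavy receiving vertex distinct from $u'$ and a place to insert the new edge without destroying planarity, which requires care. If this proves awkward, an acceptable alternative is to pass to the $2$-core of $G$ and estimate how little the deleted pendant trees can contribute to $\rho(G)$ via $\rho(G)\le\rho(\text{$2$-core})+\sqrt{p}$, where $p$ is the number of removed edges; since one only needs $p=o(\sqrt m)$, even a crude bound on $p$ suffices and the remaining computation is unchanged.
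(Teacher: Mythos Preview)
Your route is quite different from the paper's, and the gap you yourself flag as ``the main obstacle'' is genuine and unfilled.

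The paper never tries to bound $n$. Instead, assuming $d_G(u')>0.505m$, it first observes that for any $u,v\neq u'$ the three degrees satisfy $d_G(u')+d_G(u)+d_G(v)\le m+3$ (the edges incident to $\{u',u,v\}$ number at most $m$), so $d_G(u)+d_G(v)<0.496m$; combined with Lemma~\ref{du} this gives $x_u+x_v<1$ for all such pairs. It then performs a single global exchange: delete every edge not incident to $u'$ and, for each one, hang a fresh pendant on $u'$. A two-eigenvector comparison via Lemma~\ref{sxl} shows this does not decrease the spectral radius, whence $\rho(G)\le\rho(S_m)=\sqrt m$, contradicting Lemma~\ref{rhox}.

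Your Hong--Shu--Fang computation is correct once you have minimum degree $2$, but the reduction to that case is not justified. The ``short exchange argument'' for at most one pendant is never specified, and I do not see how to carry it out here: removing the pendant edge $u'v_2$ costs weight $x_{u'}x_{v_2}=1/\rho$, and to outdo that with a new edge at $v_1$ (where $x_{v_1}=1/\rho$ as well) you would need a second vertex with eigenvector entry near $1$ to attach to --- but the existence of such a vertex is precisely Lemma~\ref{xuu}, proved \emph{after} the present lemma. The $2$-core alternative has the same hole: you assert ``even a crude bound on $p$ suffices'', but give none. In fact your own inequalities, under the hypothesis $d_G(u')>0.505m$ (hence $n>0.505m$), read
\[
\sqrt{m}\;<\;\tfrac12+\tfrac12\sqrt{8m-8n+9}+\sqrt{p}\;<\;\tfrac12+\tfrac12\sqrt{3.96m+9}+\sqrt{p},
\]
which forces $\sqrt{p}\gtrsim 0.005\sqrt{m}$, i.e.\ $p\gtrsim 2.5\times10^{-5}m$ --- the wrong direction. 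So the chain does not close without an independent upper bound on $p$, and none is available with the tools established up to this point.
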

	\begin{proof}
		By contradiction, assume that $ d_G(u') > 0.505m $. Let $ A $ be the neighborhood of $ u' $, and  $ B = V(G) \setminus (A \cup \{u'\}) $.
		Write $A=\{u_1, u_2, \ldots, u_{|A|}\}, B=\{v_1, v_2, \ldots, v_{|B|}\}$.
		We have the following claim.
		
		\begin{claim}\label{claim1}
			For any $ u, v \in G \setminus \{u'\}$,	$ x_{u} + x_{v} < 1 $.
		\end{claim}
		
		\begin{proof}
			By the definition of $ L $, we may assume $ u $ and $ v $ both belong to $ L $. Noting that $ e(G) = m $. The condition $ d_G(u') > 0.505m $ implies $ d_G(u) + d_G(v) < 0.496m $. Furthermore, by Lemma \ref{du}, we have
			\begin{align*}
				\left(\frac{x_{u}+x_{v}}{2} - 0.004\right)m < d_G(u) + d_G(v) < 0.496m,
			\end{align*}
			which leads to $ x_{u} + x_{v} < 1 $.
		\end{proof}
		Next we will prove that $ u' $ is incident to
		every edge in $G$. Otherwise, there exists an edge in $ E(A)\cup E(A, B)\cup E(B) $.
		Set $|E(A)|=s, |E(A)|+|E(A, B)|=t, |E(A)|+|E(A, B)|+|E(B)|=l$.
		We delete all edges in $ E(A)\cup E(A, B)\cup E(B) $. For each removed edge, we add a new  vertex $ w_i $ and
		an edge connecting vertex $ w_i $ to $ u' $, thereby forming the new graph $ G' $.
		Clearly, $ G' $ is a disjoint  union of a star with size $m$ and $|B|$ isolated vertices.
		Let $ W=\{w_1, w_2, \dots, w_l\} $ be  the set of added  vertices, where $w_i \ (1\leq i\leq s), w_i \ (s+1\leq i\leq t), w_i\ (t+1\leq i\leq l)$ are the added vertices corresponding to the deleted edges in $E(A), E(A, B)$ and $E(B)$ respectively.
		For the graph $ G' $, let $$
		\boldsymbol{y} = \begin{pmatrix}
			1, y_{u_1}, \dots, y_{u_{|A|}}, y_{v_1}, \dots, y_{v_{|B|}}, y_{w_1}, \dots, y_{w_{l}}
		\end{pmatrix}^{\mathrm{T}}
		$$ be
		the non-negative eigenvector corresponding to the spectral radius $ \rho(G')$
		 with maximum entry $1$.
		 Then $y_{u'}=1$,  and $y_{v_i}=0$ for any $ 1\leq i\leq |B| $.
		 By symmetry, for any $1\leq i\leq |A|, 1\leq j\leq l$, we have $ y_{u_i} = y_{w_j}$.

		Let $ G^* = G \cup I_{|W|} $. Then $ \rho(G^*) = \rho(G) $ and $ |V(G^*)| = |V(G')| $.
		Recall that
		$\boldsymbol{x} = (x_{1}, x_{2}, \cdots, x_{n})^{\mathrm{T}}$ is a positive eigenvector corresponding to $ \rho(G)$ and  $x_{u'} = 1$.
		Then $$\boldsymbol{z}
		= (1, x_{u_1}, \dots, x_{u_{|A|}}, x_{v_1}, \dots, x_{v_{|B|}}, z_{w_1},  \dots, z_{w_l}),$$ where $z_{w_i}=0\ (i=1, \ldots, l)$,  is a  non-negative eigenvector corresponding to $\rho( G^*)$.

		According to Lemma \ref{sxl}, it follows that
		\begin{align}\label{eq1}
			\rho(G') - \rho(G^*)  = \frac{2}{\boldsymbol{z}^{\mathrm{T}} \boldsymbol{y}} \left(\mathcal{E}_A+\mathcal{E}_{AB}+\mathcal{E}_B\right),
		\end{align}
		where
		\begin{align*}
			\mathcal{E}_A&=\sum\limits_{\substack{i=1}}^{s} \left( x_{u'} y_{w_i} + z_{w_i} y_{u'}\right) - \sum_{ u_j u_k \in E(A)} \left(x_{u_j} y_{u_k} + x_{u_k} y_{u_j} \right),\\
			\mathcal{E}_{AB}&=\sum\limits_{\substack{i=s+1}}^{t} 	\left( x_{u'} y_{w_i} + z_{w_i} y_{u'}\right) -\sum_{u_j v_k \in E(A, B) }	\left(x_{u_j} y_{v_k} + x_{v_k} y_{u_j} \right),\\
			\mathcal{E}_{B}&=\sum\limits_{\substack{i=t+1}}^{l} \left( x_{u'} y_{w_i} + z_{w_i} y_{u'}\right) - \sum_{v_j v_k \in E(B)}	\left(x_{v_j} y_{v_k} + x_{v_k} y_{v_j} \right).
		\end{align*}

		By Claim \ref{claim1}, we have
		\begin{align*}
			\mathcal{E}_A&=\sum\limits_{\substack{i=1}}^{s} \left( x_{u'} y_{w_i} + z_{w_i} y_{u'}\right) - \sum_{ u_j u_k \in E(A)} \left(x_{u_j} y_{u_k} + x_{u_k} y_{u_j} \right)\\
			&=\sum\limits_{\substack{i=1}}^{s} \left( y_{w_i} + 0\right) - \sum\limits_{\substack{i=1}}^{s} \left(x_{u_j} y_{w_i} + x_{u_k} y_{w_i} \right)\\
			& \geq \sum\limits_{\substack{i=1}}^{s}  \left( y_{w_i} (1 - 1)  \right)   = 0.
		\end{align*}

		Similarly,
		\begin{align*}
			\mathcal{E}_{AB}&=\sum\limits_{\substack{i=s+1}}^{t}
			\left( x_{u'} y_{w_i} + z_{w_i} y_{u'}\right) -\sum_{u_j v_k \in E(A, B) }	\left(x_{u_j} y_{v_k} + x_{v_k} y_{u_j} \right) \\
			& = \sum\limits_{\substack{i=s+1}}^{t}\left( y_{w_i} + 0\right) - \sum\limits_{\substack{i=s+1}}^{t}\left(0 +x_{v_k} y_{w_i} \right) \\
			& \geq \sum\limits_{\substack{i=s+1}}^{t} \left( y_{w_i} (1 - 1)  \right)   = 0,
		\end{align*}
		and
		\begin{align*}
			\mathcal{E}_{B}&=\sum\limits_{\substack{i=t+1}}^{l} \left( x_{u'} y_{w_i} + z_{w_i} y_{u'}\right) - \sum_{v_j v_k \in E(B)}	\left(x_{v_j} y_{v_k} + x_{v_k} y_{v_j} \right)\\
			& = \sum\limits_{\substack{i=t+1}}^{l} \left( y_{w_i} + 0 - 0 - 0 \right) \geq  0.
		\end{align*}
		
		The assumption that there exists an edge in $ E(A)\cup E(A, B)\cup E(B) $ implies that $\mathcal{E}_A+\mathcal{E}_{AB}+\mathcal{E}_B>0$. By (\ref{eq1}), we deduce that $ \rho(G') - \rho(G) > 0$. Since $G'$ is a star with size $m$,  $\rho(G) < \rho(G') = \sqrt{m}$, which  contradicts  Lemma \ref{rhox}.
		So $u'$ is incident to every edge in $G$, i.e. $G$ is a star with size $m$. Then
		$\rho(G)= \sqrt{m}$, which also contradicts Lemma \ref{rhox}.
		Therefore, $ d_G(u') \leq 0.505m $.
	\end{proof}
	\begin{lemma}\label{xuu}
		There exists a vertex $ u'' \in L_1(u') \cup L_2(u') $ such that $ x_{u''} > 0.98 $.
	\end{lemma}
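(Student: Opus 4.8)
The plan is to use the eigenvector equation \eqref{q2} applied to $u'$ together with the lower bound $\rho(G)>\sqrt{m}$ from Lemma \ref{rhox}. We have $m x_{u'} = m < \rho^2(G) x_{u'} = d_G(u') + \sum_{v\in N_1(u')}\sum_{w\in N_1(v)\setminus\{u'\}} x_w$. Since $d_G(u')\le 0.505m$ by Lemma \ref{3.6}, the double sum must be at least roughly $0.495m$. The strategy is then to show that almost all of this mass of the double sum is concentrated on a single vertex $u''$ in $N_1(u')\cup N_2(u')$ whose entry is consequently close to $1$; if no such vertex existed, the double sum would be too small and we would contradict $\rho(G)>\sqrt m$.

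First I would split the double sum according to whether the inner vertex $w$ lies in $L=L^{1/1000}$ or not. The contribution from $w\notin L$ is bounded by $\left(\sum_{v\in N_1(u')} \big|N_1(v)\setminus\{u'\}\big|\cdot \text{(small entries)}\right)$; using the planarity bounds \eqref{pmbds} to control $e(N_1(u'))$, $e(N_1(u'),N_2(u'))$, $e(N_2(u'))$ against $2m$ and the fact that each such $x_w<1/1000$, this part is at most $\tfrac{2m}{1000}\cdot(\text{constant})$, i.e. a small fraction of $m$. The contribution from $w\in L_1(u')\cup L_2(u')$ is at most $\sum_{w\in L_1(u')\cup L_2(u')} x_w \cdot e(N_1(u'),\{w\})$; since $|L|\le 20000$ by Lemma \ref{l1} and each vertex $w$ has at most (planarity) $O(1)$ neighbors in $N_1(u')$ — or more carefully, $\sum_{v\in N_1(u')}\mathbf 1[vw\in E]=d_{N_1(u')}(w)$ and by the $K_{3,3}$-minor-free argument as in Lemma \ref{du} the number of $w$ with $d_{N_1(u')}(w)\ge 3$ is $O(|L|^2)=O(1)$ — the whole $L$-contribution is essentially $2\sum_{w\in L_1(u')\cup L_2(u')} x_w$ plus a lower-order term. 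Hence $0.495m \lesssim 2\sum_{w\in L_1(u')\cup L_2(u')} x_w + o(m)$, forcing $\sum_{w\in L_1(u')\cup L_2(u')} x_w > 0.2m$ — wait, that is not bounded; the point is that the sum of the entries times their multiplicities is large, but each entry is at most $1$ and there are at most $20000$ of them. So I must instead count multiplicities: writing $d^{*}(w)$ for the number of edges from $w$ to $N_1(u')$, we get $0.495m \lesssim \sum_{w\in L} d^*(w)\, x_w + o(m)$, and $\sum_{w\in L} d^*(w) \le 2e(G)\le 2m$ is too weak, so I refine: for $w\ne u''$ we want $d^*(w)$ small. The cleaner route: observe $\sum_{w} d^*(w) x_w \le x_{u''}\sum_w d^*(w) + \max_{w\ne u''}(x_w)\cdot(\text{rest})$ is circular, so instead I would argue directly that some single $w=u''$ satisfies $d^*(w)\,x_w$ large. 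Concretely, $\sum_{w\in L_1(u')\cup L_2(u')} d^*(w) x_w \ge 0.49m$ (after absorbing small terms), and since $|L_1(u')\cup L_2(u')|\le 20000$, there is a $w=u''$ with $d^*(u'')\,x_{u''}\ge 0.49m/20000$. That alone gives $x_{u''}$ bounded below by a small constant, not by $0.98$; to get $0.98$ I then apply Lemma \ref{du} to $u''$: $d_G(u'')>(\tfrac{x_{u''}}2-0.002)m$, and combine with $d^*(u'')\le d_G(u'')$ and the global edge count $d_G(u')+d_G(u'')\le m + e(\{u',u''\},\cdot)$-type bound — actually $d_G(u')+d_G(u'')\le m+1$ since the edge $u'u''$ (if present) is double-counted — so $d_G(u'')\le m+1-d_G(u')$. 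This is still consistent with large $x_{u''}$; the pinch comes from re-running \eqref{q2} for $u'$ more carefully, isolating the term $v=u''$: the contribution of paths $u'\!-\!v\!-\!u''$ is $x_{u''}\cdot e(N_1(u'),\{u''\})=x_{u''} d^*(u'')$, and the remaining path-contributions are bounded (planarity $+$ Lemma \ref{l1} $+$ small-entry bound) by something like $0.51m+o(m)$; hence $m < d_G(u') + x_{u''}d^*(u'') + 0.51m + o(m) \le 0.505m + x_{u''} d^*(u'') + 0.51m + o(m)$, but $d^*(u'')\le d_G(u'')\le d_G(u')\le 0.505m$ would give $x_{u''}>(m-1.015m)/\dots$, negative — so I need $d^*(u'')$ can be close to $m$. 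The correct accounting (which is what I would actually write): bound $\sum_{v\in N_1(u')}\sum_{w\in N_1(v)\setminus\{u'\}} x_w \le \sum_{v\in N_1(u')} x_{u''}\cdot\mathbf 1[u''\in N(v)] + (\text{terms not through }u'')$, and show the second group is $<0.5m+o(m)$ by planarity and smallness, while $\sum_{v\in N_1(u')}\mathbf 1[u''\in N(v)] \le d_G(u'')\le d_G(u')^{c}$...

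Let me restate the plan cleanly: \emph{First}, apply \eqref{q2} to $u'$ and use Lemma \ref{rhox} to get $m < d_G(u') + \Sigma$ where $\Sigma=\sum_{v\in N_1(u')}\sum_{w\in N_1(v)\setminus\{u'\}}x_w$. \emph{Second}, decompose $\Sigma=\Sigma_{\text{big}}+\Sigma_{\text{small}}$ where $\Sigma_{\text{small}}$ collects contributions of $w\notin L$ and is $O(m\varepsilon)+O(\sqrt m/\varepsilon)=o(m)$ for $\varepsilon=1/1000$ via \eqref{pmbds} exactly as in the derivation of \eqref{eq10}, and $\Sigma_{\text{big}}=\sum_{w\in L_1(u')\cup L_2(u')} d^*(w) x_w$ with $d^*(w):=|N(w)\cap N_1(u')|$. \emph{Third}, since $d_G(u')\le 0.505m$ (Lemma \ref{3.6}), conclude $\Sigma_{\text{big}}>0.49m$ for large $m$. \emph{Fourth}, for each $w\ne u'$, the $K_{3,3}$-minor-free argument (as in Lemma \ref{du}) bounds the number of $w$ with $d^*(w)\ge 3$ by $|L|^2\le 20000^2$; these contribute at most $2\cdot 20000^2=o(m)$ to $\Sigma_{\text{big}}$, so the vertices $w$ with $d^*(w)\le 2$ contribute $>0.49m-o(m)$, whence $\sum_{w:\,d^*(w)\le 2} x_w > 0.24m-o(m)$ — impossible since $|L|\le 20000$ gives that sum $\le 20000$. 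This contradiction shows my decomposition must be re-examined: in fact $d^*(w)$ can be large only for $w$ which \emph{dominates} $N_1(u')$, and planarity forces at most two such vertices (two vertices of degree $\ge 3$ into a common set of size $\ge 3$ already force $K_{3,3}$ unless they share $\le 2$ neighbors there). \emph{Fifth}, therefore there is at most one $w=u''$ with $d^*(u'')>0.1m$ (say), and for all others $d^*(w)\le$ the planar bound $2(|N_1(u')|+1)-4$ summed appropriately... — the honest version is: exactly one vertex $u''$ absorbs nearly all of $\Sigma_{\text{big}}$, giving $d^*(u'')x_{u''}>0.49m-o(m)$; combined with $d^*(u'')\le 2(d_G(u')+1)-4<1.02m$ from \eqref{pmbds} applied to the pair $(N_1(u'),\{u''\})$ — no, that's the edge bound $e(S,T)\le 2(|S|+|T|)-4$ which gives $d^*(u'')\le 2(d_G(u')+1)-4\le 2\cdot0.505m-2<1.01m$ — we get $x_{u''}>0.49m/(1.01m)-o(1)>0.48$, and then bootstrapping with Lemma \ref{du} ($d_G(u'')>(x_{u''}/2-0.002)m$) plus $d_G(u')+d_G(u'')\le m+1$ plus re-running the count with this better bound on $x_{u''}$ in the role of the "$1$" sharpens the constant until $x_{u''}>0.98$. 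The hard part, and the step I expect to require the most care, is exactly this concentration argument --- showing the double sum cannot be spread over many vertices --- which hinges on the planar ($K_{3,3}$-minor-free) bound preventing three vertices of $N_1(u')$ from having three common neighbors, precisely the mechanism already used in Lemma \ref{du}; getting the constants to close ($0.505$, $0.002$, $20000$ all chosen generously) should then be routine arithmetic with $m$ sufficiently large.
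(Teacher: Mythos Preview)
Your setup is right: plugging $u=u'$, $\varepsilon=1/1000$ into \eqref{eq10} and using Lemma~\ref{3.6} gives
\[
\sum_{v\in\overline{L_1}(u')}\ \sum_{w\in L_1(u')\cup L_2(u')} x_w \;>\; 0.492m,
\]
which, in your notation, is $\sum_w d^*(w)\,x_w>0.492m$ with $d^*(w):=|N(w)\cap \overline{L_1}(u')|$. But from this point on you take a wrong turn. You try to argue that the mass is \emph{concentrated} on a single $w=u''$ via $K_{3,3}$-type structural restrictions and then bootstrap from $x_{u''}>0.48$ up to $0.98$. The concentration step (``exactly one vertex $u''$ absorbs nearly all of $\Sigma_{\text{big}}$'') is asserted, not proved; your $K_{3,3}$ count only limits how many $w$ can have $d^*(w)\ge 3$, which says nothing about a single $w$ having huge $d^*$; and the proposed bootstrap is not a mechanism that actually improves the constant (re-running \eqref{eq10} at $u''$ tells you about neighbours of $u''$, not about $x_{u''}$). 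So as written the proof does not close.

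The missing idea is much simpler than concentration: you do \emph{not} need to isolate one vertex --- you only need a good bound on the \emph{total} multiplicity $\sum_w d^*(w)=e\bigl(\overline{L_1}(u'),\,L_1(u')\cup L_2(u')\bigr)$. Since $\overline{L_1}(u')\cap L=\emptyset$ and $u'\in L$, the edges from $\overline{L_1}(u')$ to $u'$ already account for $d_{\overline{L_1}(u')}(u')=d_G(u')-d_{L_1(u')}(u')>0.498m-|L|>0.49799m$ of the at most $m$ edges in $e(\overline{L_1}(u'),L)$; hence
\[
\sum_w d^*(w)\;\le\; e(\overline{L_1}(u'),L)-d_{\overline{L_1}(u')}(u')\;<\;m-0.49799m\;=\;0.50201m.
\]
Now plain averaging (pigeonhole) gives
\[
\max_{w\in L_1(u')\cup L_2(u')} x_w \;\ge\; \frac{\sum_w d^*(w)\,x_w}{\sum_w d^*(w)} \;>\; \frac{0.492m}{0.50201m} \;>\; 0.98,
\]
and you are done in one line. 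You explicitly noted that the crude bound $\sum_w d^*(w)\le 2m$ is ``too weak''; the fix is exactly the above: subtract off the $\approx 0.498m$ edges already going to $u'$. No $K_{3,3}$ argument, no bootstrapping, no identification of which $w$ is special is needed.
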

	
	\begin{proof}
		Recall that  $ x_{u'} = \max_{v \in V(G)} x_v = 1 $. Substituting $ u = u' $ and $ \varepsilon =  \frac{1}{1000}$ into (\ref{eq10}), we obtain
		\begin{align*}
			m < d_G(u') + \sum_{v \in \overline{L_1}(u')} \sum_{w \in L_1(u') \cup L_2(u')} x_w + 0.003m.
		\end{align*}
		Furthermore, by Lemma \ref{3.6}, we have
		\begin{align}
			\sum_{v \in \overline{L_1}(u')}\sum_{w \in L_1(u') \cup L_2(u')} x_w
			& > m - 0.003m - d_G(u')\nonumber\\
			& \geq m - 0.003m - 0.505m\nonumber\\
			&= 0.492m. \label{eq13}
		\end{align}
		
		Since $ u' \in L $, by Lemma \ref{du}, it follows that $ d_G(u') > 0.498m $. Define $ N_{L_1}(u') = N_G(u') \cap L_1(u') $ and let $ d_{L_1(u')}(u') = |N_{L_1(u')}(u')| $. Then, by Lemma \ref{l1},
		\begin{align*}
			d_{L_1(u')}(u') &\leq |L_1(u')| \leq |L|\\[2mm]
			& \leq 20000 < 0.00001m,
		\end{align*}
		where the last inequality holds as $ m $ is sufficiently large. Therefore,
		\begin{align*}
			d_{\overline{L_1}(u')}(u') = d_G(u') - d_{L_1(u')}(u') > 0.49799m.
		\end{align*}
		By combining this with (\ref{pmbds}), one  immediately obtains that
		\begin{align}
			\label{eq14}
			e(\overline{L_1}(u'), L_1(u') \cup L_2(u'))& \leq e(\overline{L_1}(u'), L) - d_{\overline{L_1}(u')}(u')\nonumber\\ &< m - 0.49799m \nonumber\\
			&= 0.50201m.
		\end{align}
		According to  (\ref{eq13}) and (\ref{eq14}), there exists a vertex $ u'' \in L_1(u') \cup L_2(u') $ such that
		\begin{align*}
			x_{u''} &\geq \frac{\sum_{v \in \overline{L_1}(u')} \sum_{w \in L_1(u') \cup L_2(u')} x_w}{e(\overline{L_1}(u'), L_1(u') \cup L_2(u'))}\\[2mm]
			&> \frac{0.492m}{0.50201m}
			> 0.98,
		\end{align*}
		as required.
	\end{proof}
	
	Set $ D = \{u', u''\} $, $ R = N_G(u') \cap N_G(u'')=\{u_1, \ldots, u_{|R|}\} $, and $ R_1 = V(G) \setminus (D \cup R)=\{v_1, \ldots, v_{|R_1|}\} $. 
	Next we will prove that the entries of the eigenvector $ \boldsymbol{x} $ corresponding to vertices in $ R \cup R_1 $ are very small.
	
	\begin{lemma}\label{xu}
		For each $u \in R \cup R_1$, we have $d_R(u) \leq 2$ and $x_u < 0.04$.
	\end{lemma}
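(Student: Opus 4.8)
The plan is to treat the two assertions separately, since they rest on different ideas: $d_R(u)\le 2$ is a forbidden-$K_{3,3}$ argument, while $x_u<0.04$ comes from an edge count around the few high-degree vertices.

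First I would prove $d_R(u)\le 2$. The point is that, by definition of $R=N_G(u')\cap N_G(u'')$, every vertex of $R$ is adjacent to both $u'$ and $u''$. So if some $u\in R\cup R_1$ had three distinct neighbours $a,b,c\in R$, I would exhibit a $K_{3,3}$ with parts $\{u',u'',u\}$ and $\{a,b,c\}$: all nine cross-edges are present (the edges $u'a,u'b,u'c,u''a,u''b,u''c$ because $a,b,c\in R\subseteq N_G(u')\cap N_G(u'')$, and $ua,ub,uc$ by assumption), and the six vertices are pairwise distinct — one checks $u'\ne u''$ since $u''\in L_1(u')\cup L_2(u')$ excludes $u'$, $u\notin\{u',u''\}$ since $u\in R\cup R_1=V(G)\setminus D$, and none of $u',u'',u$ lies in $\{a,b,c\}$ because $G$ is loopless and $a,b,c\in R\subseteq N_G(u')$. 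This contradicts the planarity of $G$, so $d_R(u)\le 2$.

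Then I would prove $x_u<0.04$. Suppose toward a contradiction that $x_z\ge 0.04$ for some $z\in R\cup R_1$. Then $z\in L$, so Lemma \ref{du} gives $d_G(z)>(x_z/2-0.002)m\ge 0.018m$. Applying Lemma \ref{du} to $u'$ and $u''$ as well (both lie in $L$ since $x_{u'}=1$ and $x_{u''}>0.98$ by Lemma \ref{xuu}) gives $d_G(u')>0.498m$ and $d_G(u'')>0.488m$. Since $u',u'',z$ are three distinct vertices, the number of edges of $G$ incident to at least one of them equals $d_G(u')+d_G(u'')+d_G(z)-e(G[\{u',u'',z\}])>1.004m-3$, which exceeds $m=e(G)$ once $m$ is large enough — a contradiction. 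Hence $x_u<0.04$ for every $u\in R\cup R_1$. (The constant $0.04$ is generous here; the same argument works down to roughly $0.033$.)

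I do not anticipate a real obstacle: the substantive work — showing via Lemmas \ref{l1}--\ref{xuu} that the only ``heavy'' vertices are $u'$ and $u''$ and that both have degree close to $m/2$ — is already done, and each step above is short once those degree lower bounds are invoked. The only care needed is the bookkeeping of applying Lemma \ref{du} to $u'$ and $u''$ themselves, not merely to $z$.
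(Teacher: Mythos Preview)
Your proposal is correct and follows essentially the same approach as the paper. Both parts match: the paper disposes of $d_R(u)\le 2$ with the identical $K_{3,3}$ observation, and for $x_u<0.04$ the paper first uses Lemma \ref{du} on $u',u''$ to get $d_G(u')>0.498m$, $d_G(u'')>0.488m$, then the edge count $e(G)=m$ to force $d_G(u)<0.015m$, and finally Lemma \ref{du} again to conclude $x_u<0.04$ --- exactly your argument run in the direct rather than contrapositive direction.
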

	\begin{proof}
		Note that for $u \in R$, we have $d_D(u) = 2$, and $d_D(u) \leq 1$ for  $u \in R_1$.
		Moreover, we assert that $d_R(u) \leq 2$ for any vertex $u \in R \cup R_1$. Otherwise, $G$ would contain a subgraph isomorphic to $K_{3, 3}$, which is impossible.

		Recall that $ x_{u'} = 1 $ and $ x_{u''} > 0.98 $ (according to Lemma \ref{xuu}). By Lemma \ref{du}, we obtain
		\begin{align}
			d_G(u') > 0.498m \quad \text{and} \quad d_G(u'') > 0.488m.\label{3.12}
		\end{align}
		Since the total number of edges in graph $ G $ is $ m $, then for each $ u \in R \cup R_1 $,  $ d_G(u) < 0.015m $.

		To show that $x_u < 0.04$ for any $u \in R \cup R_1$,
		according to the definition of $ L $, we only need to consider the case $ u \in L $. Indeed, by Lemma \ref{du}, we obtain
		\begin{align*}
			\left(\frac{x_{u}}{2} - 0.002\right)m < d_G(u) < 0.015m,
		\end{align*}
		which gives $ x_{u} < 0.04 $.
	\end{proof}
\noindent
\textbf{Proof of Theorem \ref{th3}.}  We proceed the proof by the following claims.
\begin{claim}
	$ E(R) = \emptyset $ and $ N_{G}(v) = \{u'\} $ for any
	$ v \in R_{1} $.
\end{claim}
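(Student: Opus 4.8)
The plan is to argue by contradiction: if the claim fails, then either some edge $e$ lies inside $R$, or some vertex $v \in R_1$ has a neighbor other than $u'$ (either in $R$, in $R_1$, or its unique neighbor in $D$ is $u''$ rather than $u'$). In each such case I want to delete a ``bad'' edge and reattach it as a pendant edge at $u'$, or more precisely reroute an endpoint of low eigenvector weight to the heavy vertex $u'$, producing a planar graph $G'$ of the same size $m$ with $\rho(G') > \rho(G)$, contradicting the maximality of $G$. The key quantitative inputs are Lemma~\ref{xu} (every $u \in R \cup R_1$ has $x_u < 0.04$) together with $x_{u'} = 1$ and $x_{u''} > 0.98$ from Lemma~\ref{xuu}; these guarantee that any local switch that replaces an edge between two light vertices by an edge from $u'$ to a light vertex strictly increases the Rayleigh quotient $\mathbf{x}^{\mathrm T} A \mathbf{x}/\mathbf{x}^{\mathrm T}\mathbf{x}$, hence $\rho$.

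First I would handle $E(R) \ne \emptyset$: suppose $u_iu_j \in E(R)$. Both entries are $<0.04$, while $u'$ and $u''$ have entries $\ge 0.98$; I would delete $u_iu_j$ and add a pendant vertex adjacent to $u'$ (this keeps the graph planar since a pendant edge never destroys planarity). Using Lemma~\ref{sxl} with the Perron vector $\mathbf x$ of $G$ as test vector against the Perron vector of $G'$, the change in $\mathbf x^{\mathrm T}(A(G')-A(G))\mathbf y$ is controlled by the sign of $x_{u'} - (x_{u_i}+x_{u_j})\cdot(\text{something})$, which is positive; hence $\rho(G')>\rho(G)$. (One has to be slightly careful: adding a genuinely new vertex changes $V$, so I would instead phrase this, as in the proof of Lemma~\ref{3.6}, by passing to $G^* = G \cup I_1$ with a zero entry appended, so $V(G^*)=V(G')$ and $\rho(G^*)=\rho(G)$, then apply Lemma~\ref{sxl}.) Next, for $v \in R_1$ with a neighbor $w \ne u'$: if $w \in R \cup R_1$, delete $vw$ and add a pendant at $u'$ by the same argument, since $x_w < 0.04 = x_w \cdot 1 < x_{u'}$ times the relevant weights; if the unique $D$-neighbor of $v$ is $u''$ (so $v u'' \in E$, $vu' \notin E$), then replace $vu''$ by $vu'$ — this is planar because $v$ keeps degree profile and we only move one endpoint to $u'$, and it strictly increases $\rho$ by Lemma~\ref{yb} since $x_{u'} = 1 \ge x_{u''}$ and $v \in N_G(u'')\setminus N_G(u')$. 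In each case the contradiction with $\rho(G)$ being maximum gives the claim.

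The main obstacle I anticipate is not the eigenvector inequality — Lemmas~\ref{sxl}, \ref{yb} make that essentially mechanical — but verifying that every proposed local modification keeps $G'$ \emph{planar} (and, where relevant, still has exactly $m$ edges and no isolated vertices after cleanup). Replacing an edge between two light vertices by a pendant edge at $u'$ is safe (pendant edges preserve planarity and the edge count), and rerouting one endpoint of an edge from $u''$ to $u'$ requires only that $u'$ can absorb one more incident edge without creating a $K_5$- or $K_{3,3}$-minor; since after the switch $u'$ and $u''$ together with $R$ still form (essentially) a subgraph of $K_2 \vee \overline{K_{|R|}}$ plus pendants, no forbidden minor appears. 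I would also need the bookkeeping observation that since $d_G(u')>0.498m$ and $d_G(u'')>0.488m$ (inequality~\eqref{3.12}), the ``light'' part $R \cup R_1$ carries fewer than $0.015m$ incident edges in total, so there is genuine room and the switches never collide. Assembling these cases yields $E(R)=\emptyset$ and $N_G(v)=\{u'\}$ for all $v \in R_1$.
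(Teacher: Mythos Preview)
Your overall strategy---replace each ``bad'' edge by a pendant at $u'$ (or reroute $vu''$ to $vu'$) and invoke Lemma~\ref{sxl} or Lemma~\ref{yb}---is exactly the paper's idea, but your one-edge-at-a-time execution has a genuine quantitative gap in the $E(R)$ case (and likewise for $E(R,R_1)$ and $E(R_1)$). When you delete a single edge $u_iu_j\in E(R)$ and attach a new pendant $w$ at $u'$, Lemma~\ref{sxl} gives
\[
\rho(G')-\rho(G)\ \propto\ y_w \;-\; x_{u_i}y_{u_j}\;-\;x_{u_j}y_{u_i},
\]
where $\mathbf y$ is the Perron vector of $G'$. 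You know $x_{u_i},x_{u_j}<0.04$, but you have \emph{no usable upper bound on $y_{u_i},y_{u_j}$}: in your $G'$ the vertex $u_i$ may still have many neighbours in $R\cup R_1$, so a priori $y_{u_i}$ can be as large as $1$. Then $x_{u_i}y_{u_j}+x_{u_j}y_{u_i}$ can be a fixed positive constant (up to $0.08$), whereas $y_w=y_{u'}/\rho(G')\approx 1/\sqrt m$, and the displayed difference is \emph{negative} for large $m$. So ``the sign of $x_{u'}-(x_{u_i}+x_{u_j})\cdot(\text{something})$'' is not what governs the outcome; what is actually needed is $y_{u_i}=O(1/\rho(G'))$, and that requires $u_i$ to have bounded degree in $G'$.

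The paper resolves this by performing \emph{all} the modifications simultaneously: it deletes every edge of $E(R)\cup E(R,R_1)\cup E(R_1)$ and moves every $u''$--$R_1$ edge to $u'$ in one step. In the resulting $G'$ each $u_i\in R$ is adjacent only to $u'$ and $u''$, so the eigenvector equation gives $y_{u_i}\le 2/\rho(G')$ directly (this is the paper's inequality~(\ref{equ12})); now $y_{w}-x_{u_i}y_{u_j}-x_{u_j}y_{u_i}\ge (1-0.16)/\rho(G')>0$ and the contradiction goes through. This all-at-once construction also makes planarity of $G'$ immediate (it is a subgraph of $K_2\vee \overline{K_t}$ together with pendants at $u'$), whereas your single-edge reroute $vu''\to vu'$ need not preserve planarity of the full graph, and your justification---``$u',u''$ together with $R$ still form essentially a subgraph of $K_2\vee\overline{K_{|R|}}$ plus pendants''---ignores the possibly many remaining edges inside $R\cup R_1$ at that stage.
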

\begin{proof}
	Assume, for contradiction, $ E(R)\not=\emptyset $ or there exists a vertex $v$ in $R_1$ such that  $ N_{G}(v) \not= \{u'\} $. We deduce that $E(R)\cup E(R_1)\cup E(R, R_1)\cup E(u'', R_1)\not=\emptyset$.
	We replace all the edges $ u''v$ in $ E(u'', R_1)$ with $u'v$.
	And we delete all edges in $E(R)\cup E(R_1)\cup E(R, R_1)$, and for each deleted edge, we add a new vertex $w_i$ and an edge $u'w_i$. The obtained  graph is denoted by $ G'$.
	Set $|E(R)|=s, |E(R)|+|E(R, R_1)|=t, |E(R)|+|E(R, R_1)|+|E(R_1)|=l$.
	Let $ W=\{w_1, w_2, \dots, w_l\} $ be  the set of added  vertices, where $w_i \ (1\leq i\leq s), w_i\ (s+1\leq i\leq t), w_i\ (t+1\leq i\leq l)$ are the added vertices corresponding to the deleted edges in $E(R), E(R, R_1)$ and $E(R_1)$ respectively.
	For the graph $ G' $, let $$
	\boldsymbol{y} = \begin{pmatrix}
		1, y_{u''}, y_{u_1}, \dots, y_{u_{|R|}}, y_{v_1}, \dots, y_{v_{|R_1|}}, y_{w_1}, \dots, y_{w_{l}}
	\end{pmatrix}^{\mathrm{T}}
	$$ be
	the non-negative eigenvector corresponding to the spectral radius $ \rho(G') $ with maximum entry $1$.
	Then $y_{u'}=1, y_{v_i}=0 \ (1\leq i\leq |R_1|)$ and $y_{w_i}=y_{w_1}\ (2\leq i\leq |W|)$. For any $u_i\in R, w_j\in W$,   by (\ref{q1}), we have
	
	\begin{align}
		y_{u_i}& = \frac{ x_{u'}+x_{u''}}{\rho(G')} \leq \frac{2}{\rho(G')}\label{equ12}\\
		\quad y_{w_j} &= \frac{1}{\rho(G')}	\label{equ16}.
	\end{align}
	
	Let $ G^* = G \cup I_{|W|} $. Then $ \rho(G^*) = \rho(G) $ and $ |V(G^*)| = |V(G')| $.
	Recall that
	$\boldsymbol{x} = (x_{1}, x_{2}, \cdots, x_{n})^{\mathrm{T}}$ is a positive eigenvector corresponding to $ \rho(G)$ and  $x_{u'} = 1$.
	Then $$\boldsymbol{z}=
	(1, x_{u''}, x_{u_1}, \dots, x_{u_{|R|}}, x_{v_1}, \dots, x_{v_{|R_1|}},  z_{w_1},  \dots, z_{w_l})^{\mathrm{T}}	,$$
	where $z_{w_i}=0 \ (i=1, \ldots, l)$,  is a  non-negative eigenvector corresponding to $\rho( G^*)$.
	By Lemma \ref{sxl}, we obtain
	\begin{align}\label{equ13}
		\rho(G') - \rho(G) & = \frac{2}{\boldsymbol{z}^{\mathrm{T}} \boldsymbol{y}} \left(\mathcal{E}_{R} + \mathcal{E}_{RR_1} + \mathcal{E}_{R_1}+ \mathcal{E}_{u''}\right),
	\end{align}
	where
	\begin{align*}
		\mathcal{E}_R&=\sum\limits_{\substack{i=1}}^{s} \left( x_{u'} y_{w_i} + z_{w_i} y_{u'}\right) - \sum_{ u_j u_k \in E(R)} \left(x_{u_j} y_{u_k} + x_{u_k} y_{u_j} \right),\\
		\mathcal{E}_{RR_1}&=\sum\limits_{\substack{i=s+1}}^{t} 	\left( x_{u'} y_{w_i} + z_{w_i} y_{u'}\right) -\sum_{u_j v_k \in E(R, R_1) }	\left(x_{u_j} y_{v_k} + x_{v_k} y_{u_j} \right),\\
		\mathcal{E}_{R_1}&=\sum\limits_{\substack{i=t+1}}^{l} \left( x_{u'} y_{w_i} + z_{w_i} y_{u'}\right) - \sum_{v_j v_k \in E(R_1)}	\left(x_{v_j} y_{v_k} + x_{v_k} y_{v_j} \right),\\
		\mathcal{E}_{u''}&=\sum\limits_{u''v\in E(u'', R_1)}^{k} \left( x_{u'} y_{v} + x_{v} y_{u'}- x_{u''} y_{v} - x_{v} y_{u''}\right).
	\end{align*}
	By Lemma \ref{xu}, (\ref{equ12}) and (\ref{equ16}), we have
	\begin{align*}
		\mathcal{E}_{R} &= \sum\limits_{\substack{i=1}}^{s} \left( x_{u'} y_{w_i} + z_{w_i} y_{u'}\right) - \sum_{ u_j u_k \in E(R)} \left(x_{u_j} y_{u_k} + x_{u_k} y_{u_j} \right)\\
		&\geq \sum\limits_{\substack{i=1}}^{s} \left(y_{w_i} +0\right) - \sum_{u_j u_k \in E(R)} \left(0.04 y_{u_k} + 0.04y_{u_j} \right)\\
		& \geq  \sum\limits_{\substack{i=1}}^{s} \left( \frac{1}{\rho(G')}(1 - 0.08 - 0.08) \right)\geq 0.
	\end{align*}
	Similarly,
	\begin{align*}
		\mathcal{E}_{RR_1} &= \sum\limits_{\substack{i=s+1}}^{t} \left( x_{u'} y_{w_i} + z_{w_i} y_{u'}\right) - \sum_{ u_j v_k \in E(R,R_1)} \left(x_{u_j} y_{v_k} + x_{v_k} y_{u_j} \right)\\
		&\geq \sum\limits_{\substack{i=s+1}}^{t} \left(y_{w_i} +0 - 0.04 y_{v_k} - 0.04y_{w_i} \right)\\
		& \geq  \sum\limits_{\substack{i=s+1}}^{t} \left( \frac{1}{\rho(G')}(1 - 0.08 - 0.04) \right)\geq 0,
	\end{align*}
	\begin{align*}
		\mathcal{E}_{R_1} &= \sum\limits_{\substack{i=t+1}}^{l} \left( x_{u'} y_{w_i} + z_{w_i} y_{u'}\right) - \sum_{ v_j v_k \in E(R_1)} \left(x_{v_j} y_{v_k} + x_{v_k} y_{v_j} \right)\\
		&\geq \sum\limits_{\substack{i=t+1}}^{l} \left(y_{w_i} +0\right) \geq 0,
	\end{align*}
	and
	\begin{align*}
		\mathcal{E}_{u''}&=\sum\limits_{u''v\in E(u'', R_1)}^{k} \left( x_{u'} y_{v} + x_{v} y_{u'}- x_{u''} y_{v} - x_{v} y_{u''}\right)\\
		&=\sum\limits_{u''v\in E(u'', R_1)}^{k} \left( y_{v}(1- x_{u''} ) +x_{v}( 1 - y_{u''})\right)\geq 0.
	\end{align*}
	The assumption that $E(R)\cup E(R_1)\cup E(R, R_1)\cup E(u'', R_1)\not=\emptyset$ implies that $\mathcal{E}_{R}+\mathcal{E}_{RR_1}+\mathcal{E}_{R_1}+\mathcal{E}_{u''}>0$. By (\ref{equ13}), we get $\rho(G') - \rho(G) > 0$.
	Let $G''$ be the graph obtained from $G'$ by deleting the isolated vertices. Obviously, $G''$ is a planar graph with size $m$ and $\rho(G'')=\rho(G')>\rho(G)$,
	which contradicts the maximality of $ \rho(G) $.  So $ E(R) = \emptyset $ and $ N_{G}(v) = \{u'\} $ for any
	$ v \in R_{1} $.
\end{proof}
\begin{claim}
	$ |R_1| \leq 1 $
\end{claim}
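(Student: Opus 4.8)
The plan is to argue by contradiction. Suppose $|R_1| \ge 2$ and fix two distinct vertices $v_1, v_2 \in R_1$; by the preceding claim both are pendant vertices attached only to $u'$. I would perform the edge-switch $G' = G - v_1u' + v_2u''$. In $G'$ the vertex $v_1$ becomes isolated, $v_2$ becomes a common neighbour of $u'$ and $u''$, and the number of edges is unchanged, so $G'' := G' - v_1$ is a planar graph with $m$ edges and no isolated vertex; it then suffices to show $\rho(G') > \rho(G)$, contradicting the maximality of $G$.

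Before the computation I would pin down the global shape of $G$ implied by the preceding claim: apart from $u'$ and $u''$ (joined by an edge or not, an indicator I call $\delta \in \{0,1\}$), every vertex of $G$ is either a common neighbour of $u'$ and $u''$ or a pendant at $u'$, so $e(G)=2|R|+|R_1|+\delta=m$ and hence $d_G(u') = \tfrac12(m + |R_1| + \delta)$. Combined with Lemma~\ref{3.6} this yields the key quantitative bound $|R_1| \le 0.01m$.

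I would then apply Lemma~\ref{sxl} with $\boldsymbol{x}$ the Perron vector of $G$ normalized by $x_{u'}=1$ and $\boldsymbol{y}$ a non-negative eigenvector of $A(G')$. Since the only changed edges are $v_1u'$ and $v_2u''$ and $y_{v_1}=0$, this gives $\boldsymbol{x}^{\mathrm T}\boldsymbol{y}\,(\rho(G') - \rho(G)) = x_{v_2}y_{u''} + x_{u''}y_{v_2} - x_{v_1}y_{u'}$, where $x_{v_1}=x_{v_2}=1/\rho(G)$ (both are pendants at $u'$ in $G$) and, from the eigenvalue equation in $G''$, $y_{v_2} = (y_{u'}+y_{u''})/\rho(G')$. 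The last input is $y_{u'} \ge y_{u''}$, which I would obtain by subtracting the eigenvalue equations of $G''$ at $u'$ and $u''$: because $|R_1|\ge 2$, $u'$ retains $|R_1|-2\ge 0$ extra pendant neighbours, and one finds $(\rho(G')+\delta)(y_{u'}-y_{u''}) = (|R_1|-2)\,y_{u'}/\rho(G') \ge 0$. Substituting and dropping the non-negative term $x_{u''}y_{u''}/\rho(G')$, the difference is at least
\[
\frac{y_{u'}}{\rho(G')}\left( x_{u''} - \frac{|R_1|-2}{\rho(G)\,(\rho(G')+\delta)} \right) > \frac{y_{u'}}{\rho(G')}\,(0.98 - 0.01) > 0,
\]
using $x_{u''} > 0.98$ (Lemma~\ref{xuu}), $\rho(G') \ge \rho(G) > \sqrt m$ (Lemma~\ref{rhox}) and $|R_1| \le 0.01m$. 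Since $\boldsymbol{x}^{\mathrm T}\boldsymbol{y} > 0$, this forces $\rho(G') > \rho(G)$, a contradiction, so $|R_1| \le 1$.

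The main obstacle is that this is a genuinely tight comparison: every quantity involved — the "gained" contribution from making $v_2$ a common neighbour and the "lost" contribution from deleting the pendant $v_1$ — is of order $1/\sqrt m$, so only bounds sharp to within a constant factor reveal the correct sign. What makes it go through are precisely the two earlier quantitative facts, namely that $x_{u''}$ is within $0.02$ of the maximum entry (Lemma~\ref{xuu}) and that $|R_1|=o(m)$ (via Lemma~\ref{3.6}), together with the asymmetry $y_{u'} \ge y_{u''}$ that survives because the remaining pendants still hang off $u'$. A secondary point requiring care is the routine bookkeeping that $G''$ stays planar (it is a subgraph of $K_2 \vee tK_1$ with pendants attached) and that the switch preserves the edge count.
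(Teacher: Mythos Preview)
Your approach is the same as the paper's --- the identical edge switch $G' = G - v_1u' + v_2u''$ followed by Lemma~\ref{sxl} --- and your structural identity $e(G)=2|R|+|R_1|+\delta$ together with Lemma~\ref{3.6}, giving $|R_1|\le 0.01m$, is a clean alternative to the paper's route, which instead proves $y_{u''}>0.24$ directly from $\rho(G')^2\, y_{u''}\ge d_{G'}(u'')>0.488m$ and $\rho(G')^2\le 2m$. Both arguments are really establishing that $y_{u''}$ is comparable to $y_{u'}$; yours exploits the full structure of $G$ pinned down by the preceding claim, while the paper's stays at the level of degree bounds.

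There is, however, one circular step you should repair. In the final numerical estimate you invoke ``$\rho(G')\ge\rho(G)>\sqrt m$'', but $\rho(G')>\rho(G)$ is precisely the inequality you are trying to prove, so it cannot be used as an input to the bound. The fix is immediate: in $G'$ the vertices $u'$ and $u''$ are still both adjacent to every vertex of $R$, so $G'$ contains $K_{2,|R|}$ and hence $\rho(G')\ge\sqrt{2|R|}=\sqrt{m-|R_1|-\delta}>\sqrt{0.98\,m}$ for large $m$. Plugging this in gives
\[
\frac{|R_1|-2}{\rho(G)\bigl(\rho(G')+\delta\bigr)}\;<\;\frac{0.01\,m}{\sqrt m\cdot\sqrt{0.98\,m}}\;<\;0.02,
\]
and your conclusion $0.98-0.02>0$ still goes through.
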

\begin{proof}
	Suppose, for the sake of contradiction, that $ |R_1| \geq 2 $. Let $ v_1 $ and $ v_2 $ be two vertices in $ R_1 $. Since $ v_1 $ and $ v_2 $ are only adjacent to $ u' $, by symmetry,  $ x_{v_1} = x_{v_2} $. Define $ G' = G - v_2u' + v_1u'' $. Obviously, $ G' $ is still a planar graph and $u'$ is contained in $N_{G'}(u)$ for any vertex $u$ in $N_{G'}(u'')$. Let $\boldsymbol{y}$ denote the non-negative eigenvector corresponding to the spectral radius of $ G' $  with maximum entry $1$.
	Recall that  $ \boldsymbol{x} $ is the positive eigenvector corresponding to the spectral radius of $ G $  with maximum entry $1$.
	Then $x_{u'}=y_{u'}=1$ and $y_{v_2}=0$. By (\ref{q1}), we have
	\begin{align}\label{equ14}
		\rho(G)x_{v_2}=1 \  \mbox{and}\  \rho(G')y_{v_1}=y_{u'}+y_{u''}
	\end{align}

	We first show that $ y_{u''} > 0.24 $. By (\ref{3.12}),
	$d_G(u'') > 0.488m, $
	which implies
	$d_{G'}(u'') > 0.488m. $
	For any vertex $u$ in $N_{G'}(u'')$,  we have
	\begin{align*}
		\sum_{v\in N_{G'}(u)}y_v\geq y_{u'}=1.
	\end{align*}
	Then
	\begin{align*}
		\rho^2(G')y_{u''}&=\rho(G')\left(\sum_{u\in N_{G'}(u'')}y_u\right)\\[2mm]
		& =\sum_{u\in N_{G'}(u'')}\sum_{v\in N_{G'}(u)}y_v\\[2mm]
		& \geq \sum_{u\in N_{G'}(u'')} 1\\[2mm]
		&\geq d_{G'}(u'')> 0.488m.
	\end{align*}
	Combining this with  (\ref{rhosx}), we get
	\begin{align*}
		y_{u''} > \frac{0.488m}{\rho^2(G')}\geq  \frac{0.488m}{2m} > 0.24.
	\end{align*}
	Moreover, by Lemma \ref{sxl}, Lemma \ref{rhox}, Lemma \ref{xuu},   (\ref{rhosx}) and (\ref{equ14}), we obtain
	\begin{align*}
		\rho(G') - \rho(G) &\geq \frac{2}{\boldsymbol{x}^{\mathrm{T}}\boldsymbol{y}} \left( x_{u''} y_{v_1} + x_{v_1} y_{u''} - x_{u'} y_{v_2} - x_{v_2} y_{u'} \right) \\[2mm]
		&> \frac{2}{\boldsymbol{x}^{\mathrm{T}}\boldsymbol{y}} \left( 0. 98 y_{v_1} +0. 24x_{v_1}  - 0 -x_{v_2}\right) \\[2mm]
		&> \frac{2}{\boldsymbol{x}^{\mathrm{T}}\boldsymbol{y}} \left( 0. 98 \cdot \frac{1 + 0.24}{\rho(G')} -\frac{0.76}{\rho(G)} \right) \\[2mm]
		&> \frac{2}{\boldsymbol{x}^{\mathrm{T}}\boldsymbol{y}} \left( 0.98 \cdot \frac{1.24}{\sqrt{2m}} - \frac{0.76}{\sqrt{m}} \right) > 0,
	\end{align*}
	which implies that $ \rho(G') > \rho(G) $,  contradicting the maximality of $ G $. Therefore, $ |R_1| \leq 1 $.
\end{proof}
\begin{claim}
	$u'u'' \in E(G)$.
\end{claim}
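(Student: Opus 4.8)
The plan is to show that the alternative is untenable: I would assume for contradiction that $u'u''\notin E(G)$ and deduce from the two preceding claims that $G$ is bipartite, which already forces $\rho(G)\le\sqrt m$ and contradicts Lemma \ref{rhox}.

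First I would pin down the entire edge set of $G$ under the assumption $u'u''\notin E(G)$. By construction $V(G)=\{u',u''\}\cup R\cup R_1$ is a disjoint union; every vertex of $R$ is adjacent to both $u'$ and $u''$ (this is the definition of $R$); and by the claim that $E(R)=\emptyset$ and $N_G(v)=\{u'\}$ for every $v\in R_1$, each vertex of $R_1$ is a pendant at $u'$. Consequently every edge of $G$ has exactly one endpoint in $\{u',u''\}$: there is no edge inside $\{u',u''\}$ by the standing assumption, none inside $R$ since $E(R)=\emptyset$, and none inside $R_1$ nor between $R$ and $R_1$ since vertices of $R_1$ see only $u'$. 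Hence $G$ is bipartite with parts $\{u',u''\}$ and $R\cup R_1$, so in particular $G$ is triangle-free. By Nosal's inequality \cite{Nosal1970}, $\rho(G)\le\sqrt{e(G)}=\sqrt m$, contradicting $\rho(G)>\sqrt m$ from Lemma \ref{rhox}. Therefore $u'u''\in E(G)$.

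If one prefers to stay within the swapping machinery already in use (and avoid quoting Nosal), the same contradiction can be obtained by splitting on the two possibilities left by $|R_1|\le1$: if $R_1=\emptyset$ then $G=K_{2,|R|}$ with $m=2|R|$, so $\rho(G)=\sqrt m$, again contradicting Lemma \ref{rhox}; if $R_1=\{v_1\}$, then $x_{u''}>0.98>0.04>x_{v_1}$ (Lemma \ref{xu}) and $u'\in N_G(v_1)\setminus N_G(u'')$, so Lemma \ref{yb} with $S=\{u'\}$ gives $\rho(G)<\rho(G-v_1u'+u''u')$, and after discarding the now-isolated $v_1$ the right-hand graph is the planar graph $K_2\vee\overline{K_{|R|}}$ with $m$ edges and no isolated vertices, contradicting the maximality of $\rho(G)$. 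I do not expect a genuine obstacle here: this claim is essentially a corollary of the structure already extracted in the two previous claims. The only points needing a little care are listing all edge-types of $G$ correctly (which rests on $R\subseteq N_G(u')\cap N_G(u'')$ and on the pendant description of $R_1$) and, in the alternative argument, checking that the replacement graph is again planar and isolated-vertex-free so that it lies in the class over which $G$ is extremal.
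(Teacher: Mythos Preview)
Your proof is correct, and both routes you give (Nosal and the $|R_1|$ case-split) are sound. The paper takes a slightly different, one-line path: it picks any $v\in N_G(u'')$, sets $G'=G-u''v+u'u''$, and applies Lemma~\ref{yb} directly with $x_{u'}\ge x_v$ (taking $S=\{u''\}\subseteq N_G(v)\setminus N_G(u')$); planarity and the absence of isolated vertices in $G'$ are clear since necessarily $v\in R$, so $v$ still sees $u'$. Your Nosal argument is a bit more conceptual---it extracts the bipartiteness that the two previous claims make available and closes the proof without any further graph modification---while the paper's swap avoids the external citation and the $|R_1|$ case distinction. Your second approach also works but is strictly more laborious than either; the paper's single swap handles both cases at once.
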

\begin{proof}
	If $u'u''\not\in E(G)$, then we select a vertex $v$ in  $N_G(u'')$.  Define $G' = G - u''v + u'u''$. Obviously, $G'$ remains planar.  Since  $x_v \leq x_{u'}$, by Lemma \ref{yb}, $\rho(G') > \rho(G)$, which  contradicts the maximality of $G$. Hence, $u'u'' \in E(G)$.
\end{proof}
Since the vertices in $ R_1 $ are only connected to $ u' $, $ E(R) = \emptyset $, $ u'u'' \in G $, and $ |R_1| \leq 1 $,   $ G  \cong G_m $.\qed

\section{Concluding Remarks}
In this paper, we consider the spectral extremal problems of outerplanar and planar graphs with $m$ edges. We determine the spectral extremal graphs of outerplanar graphs with $ m \geq 64 $, and  planar graphs when $ m $ is sufficiently large. However, when $ m $ is small, we find that the spectral extremal  graph over all planar graphs with size $m$ is not $ G_m $.  Based on our findings, we propose the following conjecture.

\begin{conjecture}
	The planar graph on $m$ edges with the maximum spectral radius is
	\[
	\begin{cases}
		G_m, & \text{for } m \geq 33 \text{ or } m = 31, \\
		H_m, & \text{for } m \leq 30 \text{ or } m = 32,
	\end{cases}
	\]
	where
	\[
	H_m =
	\begin{cases}
		K_2 \vee P_{\frac{m}{3}}, & \text{if } m \mod 3 = 0, \\
		K_1 \vee \left((K_1 \vee P_{\frac{m-1}{3}}) \cup K_1\right), & \text{if } m \mod 3 = 1, \\
		K_2 \vee \left(P_{\frac{m-2}{3}} \cup K_1\right), & \text{if } m \mod 3 = 2.
	\end{cases}
	\]
\end{conjecture}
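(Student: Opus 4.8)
The plan is to split the range of $m$ into a \emph{large} regime, where the structural machinery behind Theorem~\ref{th3} applies, and a \emph{small} regime handled by finite computation, and in both regimes to reduce the problem to a head-to-head comparison of the two candidates $G_m$ and $H_m$.

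\textbf{Structural reduction.} First I would argue that a spectral-extremal planar graph $G$ with $m$ edges has the shape of two dominant vertices $u',u''$ adjacent to almost all others, with the remaining vertices forming a linear forest. The existence of $u',u''$ with eigenvector entries near $1$, and the bound $d_R(u)\le 2$ for every non-apex vertex, are exactly Lemmas~\ref{xuu} and~\ref{xu}; since each non-apex vertex has at most two neighbours among the non-apex vertices, the induced outer graph is a union of paths and cycles, and a cycle whose vertices are adjacent to both apexes would force a $K_5$-minor, so the outer part is a linear forest. The one genuinely new demand here is quantitative: the constants $0.002$, $0.505m$, $0.98$ in Lemmas~\ref{du}--\ref{xu} are calibrated for ``sufficiently large $m$'', and to reach the stated threshold I would re-derive these inequalities keeping the exact lower-order terms (using $\rho(G)>\sqrt m$ from Lemma~\ref{rhox} together with the planar edge bounds~(\ref{pmbds})), so that the structural conclusion holds for all $m\ge m_0$ with an explicit, moderate $m_0$.

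\textbf{Reduction to the two finalists.} With the outer part a linear forest $F$, I would show that, for a fixed edge budget, $\rho(K_2\vee F)$ is maximised either when $F$ is an independent set or when $F$ is a single path. The key local move is the perturbation of Lemma~\ref{yb} together with Lemma~\ref{sxl}: joining two path-components into one, or trading an outer edge for an extra high-valued apex-leaf, strictly increases $\rho$ unless the configuration is already extremal, since consolidating structure into a single path concentrates weight where the Perron entries are largest, while an independent set converts each outer edge into an additional leaf. Carrying this out, and accounting for the single leftover pendant or isolated outer vertex dictated by the residues of $m$ modulo $2$ and $3$, leaves exactly $G_m$ (all outer vertices independent, $\tfrac{m-1}{2}$ of them) and $H_m$ (outer vertices forming one path, $\tfrac m3$ of them) as the finalists in each residue class.

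\textbf{The exact comparison and the threshold.} Both finalists admit small transfer/quotient descriptions: $\rho(G_m)$ is the largest root of $\lambda^2-\lambda-2\lfloor\tfrac{m-1}{2}\rfloor$ (up to a pendant correction in the even case), giving $\rho(G_m)=\tfrac12+\sqrt m+o(1)$, while for $H_m=K_2\vee P_k$ with $k\approx m/3$ the symmetric Perron equations collapse to $(\rho-1)(\rho-2)=2k-(b_1+b_k)/a$, whence $\rho(H_m)=\tfrac32+\sqrt{2m/3}+o(1)$. Equating leading terms gives $0.184\sqrt m\approx 1$, i.e.\ a crossover near $m\approx 30$, which is why the threshold lies in the mid-thirties rather than at a large implicit constant. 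To pin down the exact value $m=33$ and to explain the non-monotone exceptions $m=31$ (favouring $G_m$) and $m=32$ (favouring $H_m$), I would compare the two characteristic polynomials exactly for each residue of $m\bmod 6$, keeping the boundary term $b_1+b_k$ (computed from the path transfer matrix, i.e.\ Chebyshev polynomials of the second kind) and the pendant/isolated-vertex corrections; these $O(1/\sqrt m)$ shifts are precisely what move the balance back and forth at $m=31,32,33$.

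\textbf{Small cases and the main obstacle.} For $m$ below the threshold $m_0$ produced in the structural step, the near-book shape is no longer certified, so for these finitely many values I would verify the claim by exhaustive search over planar graphs with $m$ edges (equivalently, over the reduced two-apex-plus-linear-forest family together with the few sporadic planar graphs), comparing spectral radii with certified error bounds. The main obstacle is two-sided. First, pushing the crude constants of Lemmas~\ref{du}--\ref{xu} down to an explicit $m_0$ small enough that the residual computation is feasible is delicate, because those lemmas lose strength exactly in the range of interest. Second, and more seriously, the winner near the crossover is decided by spectral-radius gaps of size $O(1/\sqrt m)$ that depend non-monotonically on $m\bmod 6$; proving rigorously that $m=31,33$ go to $G_m$ while $m=30,32$ go to $H_m$ requires exact, sign-certified comparisons of the two characteristic polynomials rather than asymptotics, and this careful (likely computer-assisted) polynomial bookkeeping is where the real work lies.
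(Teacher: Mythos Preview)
The statement you are attempting to prove is not a theorem in the paper but an open \emph{conjecture} (Section~4, Concluding Remarks). The paper offers no proof of it; Theorem~\ref{th3} covers only ``sufficiently large $m$'' with no explicit bound, and the authors explicitly say that for small $m$ the extremal graph is not $G_m$, which is precisely what motivates the conjecture. So there is no paper proof to compare against.

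As a proof \emph{plan}, your outline is reasonable and you have correctly identified the two genuine obstacles yourself. A few remarks. First, the structural reduction you propose---making the constants in Lemmas~\ref{du}--\ref{xu} explicit so that the two-apex-plus-linear-forest picture is certified down to a concrete $m_0$---is not merely delicate but likely infeasible in the form stated: those lemmas chain several crude inequalities (e.g.\ $|L^{1/1000}|\le 20000$, the choice $\xi=\sqrt{10}\,m^{-1/4}$), and even with optimal bookkeeping the resulting $m_0$ will be orders of magnitude above $33$, so the ``finite residual computation'' would not be finite in any practical sense. Second, your claim that among all linear forests $F$ with a given edge budget $\rho(K_2\vee F)$ is maximised either by an independent set or by a single path is the crux of the reduction and is not established by the local moves you cite; Lemma~\ref{yb} lets you shift edges towards a vertex of larger Perron entry, but it does not by itself decide whether consolidating outer edges into a path beats converting them into extra apex-leaves, which is exactly the $G_m$-vs-$H_m$ question. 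Third, your asymptotics $\rho(G_m)=\tfrac12+\sqrt{m}+o(1)$ and $\rho(H_m)=\tfrac32+\sqrt{2m/3}+o(1)$ and the crossover near $m\approx 30$ are correct and explain heuristically why the threshold sits where it does, but the sign-certified polynomial comparison you defer to is the entire content of the conjecture in the critical range, and you have not carried it out. In short: what you have written is a credible research programme, not a proof, and the paper does not claim one either.
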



\end{document}